\documentclass[11pt,letterpaper]{amsart}
\usepackage{amssymb}
\usepackage{color}
\usepackage[linkcolor=blue,citecolor=blue, colorlinks=true,backref=true,bookmarks=true]{hyperref}

\theoremstyle{plain}
\newtheorem{theorem}{Theorem}[section]
\newtheorem{corollary}[theorem]{Corollary}

\newtheorem{lemma}[theorem]{Lemma}
\newtheorem{proposition}[theorem]{Proposition}

\theoremstyle{definition}
\newtheorem{remark}[theorem]{Remark}

\def\R{\mathbb{R}}
\def\S{\mathbb{S}}
\def\H{\mathbb{H}}
\def\Z{\mathbb{Z}}
\def\C{\mathbb{C}}

\def\N{\mathbb{N}}

\def\H{\mathbb{H}}

\newcommand{\PP}{\mathcal{P}}

\setlength{\parindent}{2em}
\setlength{\parskip}{1em}

\numberwithin{equation}{section}

\title{Improved Sobolev inequalities on CR sphere}

\begin{document}
\author{Zetian Yan}
\address{Department of Mathematics \\ Penn State University \\ University Park \\ PA 16802 \\ USA}
\email{zxy5156@psu.edu}
\keywords{CR Yamabe problem; CR GJMS operators; sharp Sobolev inequalities} 
\subjclass[2020]{Primary 39B62; Secondary 32V20, 32V40, 35B38.}
\begin{abstract}
We establish improved CR Sobolev inequalities on $S^{2n+1}$ under the vanishing of higher order moments of the volume element. As a direct application,
we give a simpler proof of the existence and the classification of minimizers of the CR invariant Sobolev inequalities which avoids complicated computation in Frank and Lieb's proof. Our argument relies on nice commutator identities involving the CR intertwining operators on $S^{2n+1}$ and handles both the fractional and integral cases. In the same spirit, we derive the classical sharp Sobolev inequalities using commutator identities on $S^n$. 
\end{abstract}
\maketitle

\section{Introduction}
This paper continues the program initiated in the work by the author \cite{Yan22} to simplify Frank and Lieb's proof \cite{FL10} of the sharp Sobolev inequalities for the embeddings $S^{\gamma,2}(\H^n)\hookrightarrow L^{\frac{2n+2}{n+1-\gamma}}(\H^n)$, and it handles both the fractional and integral cases. Our first objective is to establish improved CR Sobolev inequalities on $S^{2n+1}$ under the vanishing of higher order moments of the area elements; see Theorem \ref{improve} for a precise statement. As a direct application, we give a new proof of the existence of minimizers of CR Sobolev inequalities. Our second objective is to derive commutator identities involving intertwining operators on $S^{2n+1}$; see Theorem \ref{commutator} for a precise statement. By combining this with the Frank--Lieb argument developed in \cite{Case19}, we give a simpler and direct classification of optimizers. In particular, our proof avoids complicated computations, fully addressing an open problem of Frank and Lieb \cite{FL10}.

Our derivation of Theorem \ref{improve} is motivated by recent work in \cite{CH22} and \cite{HW21}, where Aubin's Moser--Trudinger--Onofri inequality and Sobolev inequalities on $S^n$ are improved under the vanishing of higher order moments of the volume element, respectively. The key observation in \cite{HW21} is that, if it is not true, there exists a sequence of functions $\{F_i\}$ in $W^{1,p}(S^n)$ such that
\begin{displaymath}
\|\nabla F_i\|^p_p\leqslant \frac{1}{\alpha}, \quad \|F_i\|_{p^*}=1, 
\end{displaymath}
and $\{F_i\}$ converges to zero weakly in $L^p(S^n)$, where $p^*=\frac{np}{n-p}$ and $\alpha$ is the leading coefficient of the improved Sobolev inequality \cite{HW21}. Combining this with the concentration compactness principle \cite{Lions85}, this contradicts the choice of $\alpha$. We will apply this idea to prove Theorem \ref{improve} in Section 3. The crucial point of the proof is the concentration compactness principle Lemma \ref{ccp} on CR sphere. 

Let $\mathcal{A}_{2\gamma}^{\theta_c}$, $0<\gamma<n+1$, be the CR intertwining operators associated with the canonical contact form $\theta_c$ on $\H^n$. Folland \cite{Folland75} proved that for all $f\in S^{\gamma,2}(\H^n)$, $p=\frac{2Q}{Q-2\gamma}$, there exists a positive constant $C$ such that 
\begin{equation}\label{optimal}
    \left(\int_{\H^n}|f|^pdu\right)^{\frac{2}{p}}\leqslant C\int_{\H^n}\bar{f} \mathcal{A}_{2\gamma}^{\theta_c} (f) du,
\end{equation}
where $S^{\gamma,2}(\H^n)$ is the Folland--Stein space introduced in \cite{Ponge05}, and $du=\theta_c\wedge (d\theta_c)^n$ is the volume form associated with $\theta_c$ on $\H^n$. Let $C_{n,2\gamma}$ be the smallest real number so that (\ref{optimal}) is true for all $f\in S^{\gamma,2}(\H^n)$. Equivalently, via the Cayley transform $\mathcal{C}:\H^n\to S^{2n+1}\backslash (0,\cdots, -1)$ given by 
\begin{displaymath}
    \mathcal{C}(z,t)=\left(\frac{2z}{1+|z|^2-it}, \frac{1-|z|^2+it}{1+|z|^2-it}\right),
\end{displaymath}
we have for all $F\in S^{\gamma,2}(S^{2n+1})$,
\begin{equation}\label{sharpSo}
\left(\int_{S^{2n+1}}|F|^p d\xi\right)^{\frac{2}{p}}\leqslant C_{n,2\gamma}\int_{S^{2n+1}}\bar{F} \mathcal{A}_{2\gamma}^{\theta_0} (F) d\xi,
\end{equation}
where $d\xi=\theta_0\wedge (d\theta_0)^n$ is the volume form associated with the canonical contact form $\theta_0$ on $S^{2n+1}$.

The explicit value of $C_{n,2\gamma}$ is computed by Frank and Lieb \cite{FL10} by classifying the extremal functions of (\ref{sharpSo}); i.e., equality holds in (\ref{sharpSo}) if and only if
\begin{displaymath}
F(\eta)=\frac{C}{|1-\xi \cdot \bar{\eta}|^{\frac{Q-2\gamma}{2}}}, \quad \eta\in S^{2n+1},
\end{displaymath}
for some $C\in \C, \xi\in \C^{n+1}, |\xi|<1$ and 
\begin{equation}
    C_{n,2k}=\left(\frac{1}{4\pi}\right)^{\gamma}\frac{\Gamma^2(\frac{n+1-\gamma}{2})}{\Gamma^2(\frac{n+1+\gamma}{2})}.
\end{equation}
Among the results of this article is a new computation of the value of $C_{n,2\gamma}$ and the classification of its optimizers; see Theorem \ref{class} for more details.

Recall that a homogeneous polynomial on $\C^{n+1}$ with bidegree $(j,k)$ is a polynomial $g$ such that for any $\lambda\in \C$, $z\in \C^{n+1}$,
\begin{displaymath}
    g(\lambda z)=\lambda^j \bar{\lambda}^{k}g(z).
\end{displaymath}
For a pair of nonnegative integers $(j, k)$, we denote
\begin{center}
$\PP_{j,k}:=\{$all homogeneous polynomials on $\C^{n+1}$ with bidegree $(j,k)\}$,

$\widetilde{\PP}_{j,k}:=\bigcup\limits_{(\tilde{j},\tilde{k})}\PP_{\tilde{j},\tilde{k}}$, \quad for all $\tilde{j}\leqslant j$, $\tilde{k}\leqslant k$,

$\overline{\PP}_{j,k}:=\left\{g\in \widetilde{\PP}_{j,k}; \int_{S^{2n+1}}g d\xi=0 \right\}$.
\end{center}
For $m\in \N$ and $0\leqslant \theta\leqslant 1$, as in \cite{HW21} we define
\begin{center}
$\mathcal{M}^c_{j,k}(S^{2n+1}):=\{\nu$; $\nu$ is a probability measure on $S^{2n+1}$ supported on countably many points such that $\int_{S^{2n+1}}gd\nu=0$, for all $g\in \overline{\PP}_{j,k}\}$
\end{center}
and 
\begin{displaymath}
\begin{split}
    \Theta&({j,k};\theta,2n+1)\\
    &:=\inf\left\{\sum_{i}\nu^{\theta}_i; \nu\in \mathcal{M}^c_{j,k}(S^{2n+1}), \mathrm{supp}(\nu)=\{x_i\}\subset S^{2n+1}, \nu_i=\nu(x_i)\right\}.
    \end{split}
\end{displaymath}

Our first result gives improved CR Sobolev inequalities under the assumption that higher moments vanish.
\begin{theorem}\label{improve}
Denote $Q=2n+2$, $p=\frac{2Q}{Q-2\gamma}$, $\gamma\in (0, n+1)$. Then for any $\epsilon>0$, and $F\in S^{\gamma,2}(S^{2n+1})$ with
\begin{equation}\label{constraint}
    \int_{S^{2n+1}}g|F|^p d\xi=0
\end{equation}
for all $g\in \overline{\PP}_{j,k}$, we have
\begin{equation}\label{asi}
\begin{split}
   \left(\int_{S^{2n+1}}|F|^p d\xi\right)^{\frac{2}{p}}\leqslant& \left(\frac{{C}_{n,2\gamma}}{\Theta({j,k};\frac{Q-2\gamma}{Q},2n+1)}+\epsilon\right) \int_{S^{2n+1}}\bar{F} \mathcal{A}_{2\gamma}^{\theta_0} (F) d\xi\\
   &+C(\epsilon)\int_{S^{2n+1}}|F|^2 d\xi,
\end{split}
\end{equation}
where $C(\epsilon)$ is a constant depending on $\epsilon$.
\end{theorem}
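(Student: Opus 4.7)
The plan is to argue by contradiction, transplanting the strategy of \cite{HW21} in the Riemannian setting to the CR sphere. The concentration--compactness principle Lemma \ref{ccp} is the main analytic input; once it is in hand, the vanishing--moment constraint couples to the definition of $\Theta$ in precisely the right way.

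Suppose inequality (\ref{asi}) fails for some $\epsilon_0 > 0$. Then there exists a sequence of constants $C_i \to \infty$ and a sequence $\{F_i\} \subset S^{\gamma,2}(S^{2n+1})$ satisfying (\ref{constraint}), normalized so that $\|F_i\|_{L^p(S^{2n+1})} = 1$, for which
\[
1 > \left(\frac{C_{n,2\gamma}}{\Theta(j,k;\tfrac{Q-2\gamma}{Q},2n+1)} + \epsilon_0\right) E_i + C_i\,\|F_i\|_{L^2(S^{2n+1})}^2,
\]
where $E_i := \int_{S^{2n+1}} \bar{F}_i\, \mathcal{A}_{2\gamma}^{\theta_0}(F_i)\, d\xi$. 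The right--hand term forces $\|F_i\|_{L^2} \to 0$, while the bound on the first term together with (\ref{sharpSo}) gives $\limsup_i E_i \leq (C_{n,2\gamma}/\Theta + \epsilon_0)^{-1}$, so $\{F_i\}$ is bounded in $S^{\gamma,2}(S^{2n+1})$. After passing to a subsequence, $F_i$ converges weakly in $S^{\gamma,2}$ to some $F_\infty$, and the strong $L^2$ convergence forces $F_\infty = 0$; in particular $F_i \rightharpoonup 0$ weakly in $L^p$.

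Next, apply Lemma \ref{ccp} to $\{F_i\}$. Because the weak $L^p$ limit vanishes and $S^{2n+1}$ is compact, there exist at most countably many points $\{x_j\} \subset S^{2n+1}$ and weights $\nu_j > 0$ with $\sum_j \nu_j = 1$ such that, along a subsequence, $|F_i|^p d\xi$ converges weakly as measures to $\nu := \sum_j \nu_j \delta_{x_j}$. Localizing the sharp Folland--Stein inequality (\ref{sharpSo}) at each atom yields
\[
\liminf_{i \to \infty} E_i \geq \frac{1}{C_{n,2\gamma}} \sum_j \nu_j^{2/p}.
\]
Passing to the limit in the moment constraint (\ref{constraint}) shows $\int_{S^{2n+1}} g\, d\nu = \sum_j \nu_j\, g(x_j) = 0$ for every $g \in \overline{\PP}_{j,k}$, so $\nu \in \mathcal{M}^c_{j,k}(S^{2n+1})$. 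Since $2/p = (Q-2\gamma)/Q$, the definition of $\Theta$ then gives
\[
\liminf_{i\to\infty} E_i \geq \frac{\Theta(j,k;\tfrac{Q-2\gamma}{Q},2n+1)}{C_{n,2\gamma}}.
\]

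This lower bound on $\liminf E_i$ contradicts the upper bound $\limsup_i E_i \leq (C_{n,2\gamma}/\Theta + \epsilon_0)^{-1} < \Theta/C_{n,2\gamma}$ coming from the contradiction hypothesis, completing the argument. The main obstacle is the sharp form of Lemma \ref{ccp} on the CR sphere: one has to localize the nonlocal intertwining operator $\mathcal{A}_{2\gamma}^{\theta_0}$ uniformly near each concentration point so that the local energy is bounded below by $\nu_j^{2/p}/C_{n,2\gamma}$ with the \emph{sharp} Folland--Stein constant and no spurious multiplicative loss, and this must be done in both the fractional and integral regimes. Granting that, the extraction of weak limits and the passage of polynomial moments to the limit measure are standard.
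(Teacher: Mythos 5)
Your proposal is correct and follows essentially the same argument as the paper: argue by contradiction, normalize $\|F_i\|_{L^p}=1$, deduce $F_i \rightharpoonup 0$ and $\|F_i\|_{L^2}\to 0$ from the failure of the inequality, invoke the CR concentration--compactness Lemma~\ref{ccp} to produce the atomic limit measure $\nu$, pass the moment constraints to $\nu$ to place it in $\mathcal{M}^c_{j,k}(S^{2n+1})$, and compare the resulting lower bound on the energy against the upper bound forced by the contradiction hypothesis. The only cosmetic difference is that you rederive the bound $\liminf E_i \geq C_{n,2\gamma}^{-1}\sum_j\nu_j^{2/p}$ by ``localizing Folland--Stein,'' whereas the paper reads it off directly from the statement of Lemma~\ref{ccp} via the auxiliary energy measure $\sigma$ and $\sum_j\sigma_j\leq\sigma(S^{2n+1})$.
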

When $j+k=1$, the condition $\int_{S^{2n+1}}gd\nu=0$ for all $g\in \overline{\PP}_{j,k}$ is equivalent to the balanced conditions
\begin{equation}\label{balanced}
    \int_{S^{2n+1}}\xi_i d\nu=0, \quad i=1,\cdots, 2n+2,
\end{equation}
where $\{\xi_i\}_{i=1}^{2n+2}$ are coordinate functions on $\R^{2n+2}$. Therefore, the exact value of $\Theta({j,k}; \theta,2n+1)$ is $2^{1-\theta}$, shown in \cite{HW21}. We have the following corollary for balanced functions on $S^{2n+1}$.
\begin{corollary}\label{impro}
Denote $Q=2n+2$, $p=\frac{2Q}{Q-2\gamma}$, $\gamma\in (0, n+1)$. Then for any $\epsilon>0$, and $F\in S^{\gamma,2}(S^{2n+1})$ with
\begin{equation}
    \int_{S^{2n+1}}\xi_i|F|^p d\xi=0, \quad i=1,\cdots, 2n+2,
\end{equation}
we have
\begin{equation}
\left(\int_{S^{2n+1}}|F|^p d\xi\right)^{\frac{2}{p}}\leqslant \left(\frac{{C}_{n,2\gamma}}{2^{\frac{\gamma}{n+1}}}+\epsilon\right) \int_{S^{2n+1}}\bar{F} \mathcal{A}_{2\gamma}^{\theta_0} (F) d\xi+C(\epsilon)\int_{S^{2n+1}}|F|^2 d\xi,
\end{equation}
where $C(\epsilon)$ is a constant depending on $\epsilon$.
\end{corollary}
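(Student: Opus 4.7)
The plan is to obtain Corollary \ref{impro} as a direct specialization of Theorem \ref{improve} to the case $j+k=1$, after identifying the balanced condition with the orthogonality constraint (\ref{constraint}) and inserting the known value of $\Theta$.

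First, I would verify the claim that for $j+k=1$ the condition $\int_{S^{2n+1}} g |F|^p\,d\xi = 0$ for all $g\in\overline{\PP}_{j,k}$ is equivalent to the balanced conditions (\ref{balanced}). Indeed, $\widetilde{\PP}_{j,k}$ with $j+k=1$ is spanned by the constant $1$ together with $z_1,\ldots,z_{n+1}$ and $\bar z_1,\ldots,\bar z_{n+1}$; by rotational symmetry $\int_{S^{2n+1}} z_i\,d\xi = \int_{S^{2n+1}}\bar z_i\,d\xi = 0$, so after removing the constants (which are killed by the mean-zero requirement) $\overline{\PP}_{j,k}$ is spanned by the linear functions $\{z_i,\bar z_i\}$. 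Writing $z_i = \xi_{2i-1} + \sqrt{-1}\,\xi_{2i}$ and taking real and imaginary parts shows the constraint is equivalent to $\int_{S^{2n+1}} \xi_i |F|^p\,d\xi = 0$ for $i=1,\ldots,2n+2$.

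Next, I would substitute the exponent $\theta = \frac{Q-2\gamma}{Q} = \frac{n+1-\gamma}{n+1}$ into the value $\Theta(j,k;\theta,2n+1) = 2^{1-\theta}$ recorded in the paragraph following Theorem \ref{improve} (established in \cite{HW21}); this value is witnessed by the antipodal probability measure $\tfrac12\delta_{\xi_0} + \tfrac12\delta_{-\xi_0}$, whose total mass weighted by $\theta$ is $2\cdot (1/2)^\theta = 2^{1-\theta}$, and which manifestly kills every linear function on $\R^{2n+2}$. With this choice of $\theta$ one computes $1-\theta = \frac{\gamma}{n+1}$, so $\Theta = 2^{\gamma/(n+1)}$.

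Finally, I would plug this value back into the inequality (\ref{asi}) of Theorem \ref{improve}; the leading coefficient becomes $\frac{C_{n,2\gamma}}{2^{\gamma/(n+1)}} + \epsilon$, matching the corollary exactly. Since both the identification of the constraint and the evaluation of $\Theta$ are essentially bookkeeping items, no step presents a serious obstacle; the proof is a short deduction from Theorem \ref{improve} together with the cited result from \cite{HW21}.
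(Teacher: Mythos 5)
Your proposal is correct and follows exactly the paper's (brief) route: specialize Theorem \ref{improve} to $j+k=1$, identify the moment constraint with the balanced condition \eqref{balanced}, and substitute $\Theta\bigl(j,k;\tfrac{Q-2\gamma}{Q},2n+1\bigr)=2^{1-\theta}=2^{\gamma/(n+1)}$ from \cite{HW21}, since $1-\theta = 1 - \tfrac{n+1-\gamma}{n+1} = \tfrac{\gamma}{n+1}$. One small point of bookkeeping: for a single pair with $j+k=1$, say $(j,k)=(1,0)$, the space $\widetilde{\PP}_{1,0}$ contains only the constants and the $z_i$ (not the $\bar z_i$, which have bidegree $(0,1)$), so $\overline{\PP}_{1,0}=\mathrm{span}\{z_1,\ldots,z_{n+1}\}$; your description of the span as including both $z_i$ and $\bar z_i$ is not quite what the definition gives for a single bidegree. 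This does not affect the conclusion, however, because $|F|^p\,d\xi$ is a real measure, so $\int z_i|F|^p\,d\xi=0$ for all $i$ is already equivalent to $\int \xi_l|F|^p\,d\xi=0$ for all $l=1,\ldots,2n+2$ by taking real and imaginary parts, which is precisely the step you use.
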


For $0<\gamma<n+1$, let $(w,w')\in \R^2$ such that $w+w'+n+1=\gamma$ and $w-w'\in \Z$. Through this article, without further comment, we will assume implicitly that $w-w'\in \Z$. On $(S^{2n+1}, \theta_0)$, $\mathcal{A}_{w,w'}^{\theta_0}$ is the unique self-adjoint intertwining operator of order $2\gamma$ in the sense of (\ref{intertwining}); see Proposition \ref{generalclass} for more details. 
\begin{theorem}\label{commutator}
Let $(S^{2n+1}, T^{1,0}S^{2n+1}, \theta_0)$ be the CR unit sphere. For $0<\gamma<n+1$, let $(w,w')\in \R^2$ such that $w+w'+n+1=\gamma$. $\mathcal{A}_{w,w'}^{\theta_0}$ are self-adjoint intertwining operators characterized in Proposition \ref{generalclass}. Then 
\begin{equation}\label{commutator2}
    \sum_{j=1}^{n+1}\bar{z}_j \left[\mathcal{A}_{w,w'}^{\theta_0},z_j\right]=\gamma (\gamma-1-w') \mathcal{A}_{w-1,w'}^{\theta_0}.
\end{equation}
\end{theorem}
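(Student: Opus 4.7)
The plan is to verify (\ref{commutator2}) on each irreducible summand of the decomposition $L^2(S^{2n+1}) = \bigoplus_{j,k \geq 0} \mathcal{H}_{j,k}$ into CR spherical harmonics of bidegree $(j,k)$. By Proposition \ref{generalclass}, $\mathcal{A}_{w,w'}^{\theta_0}$ acts on $\mathcal{H}_{j,k}$ as multiplication by an explicit scalar $\lambda_{j,k}(w,w')$ expressible as a product of $\Gamma$-function ratios in $j, k, w, w'$.

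The first step is to describe how multiplication by $z_\ell$ acts on $\mathcal{H}_{j,k}$. For $F \in \mathcal{H}_{j,k}$, a direct computation gives $\sum_i \partial_{z_i}\partial_{\bar z_i}(z_\ell F) = \partial_{\bar z_\ell}F$. Combining this with the Fischer identity $\sum_i \partial_{z_i}\partial_{\bar z_i}(|z|^{2r}P) = r(n+p+q+r)|z|^{2r-2}P$ for harmonic $P$ of bidegree $(p,q)$, and observing that $(\sum_i\partial_{z_i}\partial_{\bar z_i})^2(z_\ell F) = 0$ forces all Fischer components below $(j,k-1)$ to vanish, restriction to $S^{2n+1}$ gives the two-term decomposition
\[
z_\ell F = F^+_\ell + F^-_\ell, \qquad F^+_\ell \in \mathcal{H}_{j+1,k}, \quad F^-_\ell = \frac{1}{n+j+k}\partial_{\bar z_\ell}F \in \mathcal{H}_{j,k-1}.
\]
Summing over $\ell$ and using $\sum_\ell |z_\ell|^2 \equiv 1$ on the sphere together with $\sum_\ell \bar z_\ell \partial_{\bar z_\ell}F = kF$, both sums $\sum_\ell \bar z_\ell F^{\pm}_\ell$ turn out to lie in $\mathcal{H}_{j,k}$ (the a priori contributions to $\mathcal{H}_{j\pm 1, k\pm 1}$ cancel automatically), namely
\[
\sum_{\ell=1}^{n+1}\bar z_\ell F^+_\ell = \frac{n+j}{n+j+k}F, \qquad \sum_{\ell=1}^{n+1}\bar z_\ell F^-_\ell = \frac{k}{n+j+k}F.
\]

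Applying $\mathcal{A}_{w,w'}^{\theta_0}$ eigenvalue-by-eigenvalue then collapses the commutator to a scalar on $\mathcal{H}_{j,k}$:
\[
\sum_{\ell=1}^{n+1}\bar z_\ell \bigl[\mathcal{A}_{w,w'}^{\theta_0}, z_\ell\bigr] F = \left(\frac{(n+j)\lambda_{j+1,k}(w,w') + k\lambda_{j,k-1}(w,w')}{n+j+k} - \lambda_{j,k}(w,w')\right) F.
\]
It therefore suffices to verify the scalar identity
\[
\frac{(n+j)\lambda_{j+1,k}(w,w') + k\lambda_{j,k-1}(w,w')}{n+j+k} - \lambda_{j,k}(w,w') = \gamma(\gamma-1-w')\,\lambda_{j,k}(w-1,w').
\]
After dividing through by $\lambda_{j,k}(w,w')$, each of the three ratios $\lambda_{j+1,k}/\lambda_{j,k}$, $\lambda_{j,k-1}/\lambda_{j,k}$ and $\lambda_{j,k}(w-1,w')/\lambda_{j,k}(w,w')$ reduces to a single linear factor via $\Gamma(x+1) = x\Gamma(x)$. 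Using the rewriting $n+j+k = (j-w)+(k-w')+\gamma-1$ forced by $w+w'+n+1=\gamma$, the identity collapses after a short manipulation in which both sides produce a common factor proportional to $\gamma(\gamma-1-w')$.

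The main obstacle is precisely this final scalar identity. Although purely algebraic, it requires tracking three differently shifted eigenvalues simultaneously, and the coefficient $\gamma(\gamma-1-w')$ only emerges once the weight constraint $w+w'+n+1=\gamma$ is substituted into $n+j+k$. Everything preceding this step is a direct consequence of the Fischer decomposition on $\mathbb{C}^{n+1}$ together with the defining identity $\sum_\ell |z_\ell|^2 = 1$ on the CR sphere.
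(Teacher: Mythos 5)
Your proposal is correct and follows essentially the same approach as the paper's proof: decompose $z_\ell Y_{j,k}$ into its $\mathcal{H}_{j+1,k}$ and $\mathcal{H}_{j,k-1}$ components (the paper cites \cite{AR95} for this, while you derive it directly from Fischer's lemma, which is a nice self-contained touch), multiply by $\bar z_\ell$ and sum to collapse everything onto $\mathcal{H}_{j,k}$, and then verify the resulting scalar identity using $\Gamma(x+1)=x\Gamma(x)$ together with the weight constraint $w+w'+n+1=\gamma$. The only minor imprecision is your claim that $\lambda_{j,k}(w-1,w')/\lambda_{j,k}(w,w')$ is a single linear factor; since $\mathcal{A}^{\theta_0}_{w-1,w'}$ has order $2(\gamma-1)$, this ratio is actually $\bigl((j-w)(k-1+\gamma-w')\bigr)^{-1}$, a product of two linear factors, but this does not affect the argument.
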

Theorem \ref{commutator} handles both fractional and integral cases and covers the result in \cite{Yan22}. The proof of Theorem \ref{commutator} relies on direct computation of the spectrum with respect to spherical harmonics, which is different from that in \cite{Yan22}. In the case $w=w'$, combining (\ref{commutator2}) with Corollary \ref{sharpSo}, we give a new proof of sharp CR Sobolev inequalities on $S^{2n+1}$.
\begin{theorem}\label{class}
Let $(S^{2n+1}, T^{1,0}S^{2n+1}, \theta_0)$ be the CR unit sphere with the volume element $d\xi=\theta_0\wedge (d\theta_0)^n$. Denote $Q=2n+2$ and $\gamma\in (0, n+1)$. Then $u$ is a positive minimizer of (\ref{sharpSo}) if and only if
\begin{displaymath}
u(\eta)=\frac{C}{|1-\xi \cdot \bar{\eta}|^{\frac{Q-2\gamma}{2}}}, \quad \eta\in S^{2n+1}
\end{displaymath}
for some $C\in \C, \xi\in \C^{n+1}, |\xi|<1$.
\end{theorem}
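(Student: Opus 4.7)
The plan is to follow the Frank--Lieb scheme as reformulated in \cite{Case19}, driving both existence and classification from the commutator identity of Theorem~\ref{commutator} together with the balanced improved inequality of Corollary~\ref{impro}. I would proceed in three stages: existence, rigidity, and conjugation by CR automorphisms.

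First I would establish existence of a positive minimizer. Given an $L^p$-normalized minimizing sequence $\{F_i\}$ for \eqref{sharpSo}, I use the CR automorphism group of $S^{2n+1}$ (transitive on $|\xi|<1$ via the Cayley transform) to replace each $F_i$ by a CR-equivalent $\tilde F_i$ whose density $|\tilde F_i|^p\,d\xi$ is balanced, i.e.\ $\int_{S^{2n+1}}\xi_j |\tilde F_i|^p\,d\xi=0$ for every coordinate function. Corollary~\ref{impro} with $\eps$ small then yields a uniform $S^{\gamma,2}$-bound on $\{\tilde F_i\}$; the compact embedding $S^{\gamma,2}(S^{2n+1})\hookrightarrow L^2(S^{2n+1})$ combined with a diagonal argument in $\eps$ produces a nonzero weak limit $u$ that realizes the infimum, which by subelliptic regularity and replacement by $|u|$ can be taken positive and assumed to satisfy the Euler--Lagrange equation
\begin{equation*}
\mathcal{A}_{2\gamma}^{\theta_0} u=\lambda\,u^{p-1}
\end{equation*}
with $\lambda>0$. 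A further CR automorphism normalizes $u$ itself to be balanced.

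Next I would invoke Theorem~\ref{commutator} with the symmetric choice $w=w'=(\gamma-n-1)/2$, so that $\mathcal{A}_{w,w}^{\theta_0}=\mathcal{A}_{2\gamma}^{\theta_0}$. Using $\sum_{j=1}^{n+1}|z_j|^2=1$ on $S^{2n+1}$ and the commutator identity, expanding $[\mathcal{A}_{2\gamma}^{\theta_0},z_j]u=\mathcal{A}_{2\gamma}^{\theta_0}(z_j u)-z_j\mathcal{A}_{2\gamma}^{\theta_0}u$, substituting the Euler--Lagrange equation, and summing in $j$ produces the integral identity
\begin{equation*}
\sum_{j=1}^{n+1}\langle z_j u,\mathcal{A}_{2\gamma}^{\theta_0}(z_j u)\rangle=\langle u,\mathcal{A}_{2\gamma}^{\theta_0}u\rangle+\gamma\cdot\tfrac{\gamma+n-1}{2}\,\langle u,\mathcal{A}_{2(\gamma-1)}^{\theta_0}u\rangle.
\end{equation*}
Applying \eqref{sharpSo} to each $z_j u$ and summing, together with the sharp identity $\langle u,\mathcal{A}_{2\gamma}^{\theta_0}u\rangle=\|u\|_p^2/C_{n,2\gamma}$ at the minimizer, a power--mean comparison bounding $\sum_j\|z_j u\|_p^2$ from below using $\sum_j|z_j|^2=1$, and Corollary~\ref{impro} applied to the balanced $u$, I would force every inequality in the chain to saturate. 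The equality case localizes $u$ to the ground state of $\mathcal{A}_{2\gamma}^{\theta_0}$; since this eigenspace consists of constants, $u$ is constant.

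Finally, the set of positive minimizers of \eqref{sharpSo} is an orbit under the CR automorphism group; conjugating the constant solution through the Möbius-type transformations used to balance $u$ produces precisely the family
\begin{equation*}
F(\eta)=\frac{C}{|1-\xi\cdot\bar\eta|^{(Q-2\gamma)/2}},\qquad \xi\in\C^{n+1},\ |\xi|<1,
\end{equation*}
as claimed. I expect the main obstacle to be the rigidity step: tracking the interplay between the reverse-Hölder lower bound on $\sum_j\|z_j u\|_p^2$, the sign and size of the auxiliary $\mathcal{A}_{2(\gamma-1)}^{\theta_0}$ term, and the $L^2$ remainder in Corollary~\ref{impro} with enough precision to exclude non-constant balanced minimizers. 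In the fractional range $2\gamma\notin 2\Z$ further care is required because $\mathcal{A}_{2(\gamma-1)}^{\theta_0}$ is nonlocal, so the integration-by-parts producing the identity above must be justified within the Folland--Stein space $S^{\gamma,2}(S^{2n+1})$.
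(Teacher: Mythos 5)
Your rigidity step has a genuine gap, and it stems from using the Euler--Lagrange equation (first variation) where the argument requires the second variation. Trace the constants: applying the sharp inequality \eqref{sharpSo} to each $z_j u$ gives $\|z_j u\|_p^2\leqslant C_{n,2\gamma}\langle z_j u,\mathcal{A}_{2\gamma}^{\theta_0}(z_j u)\rangle$, and Jensen with the probability measure $|u|^p\,d\xi/\|u\|_p^p$ gives the reverse-H\"older lower bound $\sum_j\|z_j u\|_p^2\geqslant\|u\|_p^2$. Feeding in the commutator identity and the normalization $\|u\|_p^2=C_{n,2\gamma}\langle u,\mathcal{A}_{2\gamma}^{\theta_0}u\rangle$ at the minimizer, your chain collapses to $0\leqslant\gamma\cdot\tfrac{\gamma+n-1}{2}\langle u,\mathcal{A}_{w-1,w}^{\theta_0}u\rangle$ with $w=(\gamma-n-1)/2$, which is vacuous since $\mathcal{A}_{w-1,w}^{\theta_0}$ has strictly positive spectrum. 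What is actually needed is the second-variation estimate (Proposition~\ref{spectral}): for a balanced local minimizer, $\sum_l\langle z_l u,\mathcal{A}_{2\gamma}^{\theta_0}(z_l u)\rangle\geqslant(p-1)\langle u,\mathcal{A}_{2\gamma}^{\theta_0}u\rangle$. The factor $p-1>1$, not the factor $1$ your Jensen bound supplies, is precisely what makes the operator $(p-2)\mathcal{A}_{2\gamma}^{\theta_0}-\gamma(\gamma-1-w)\mathcal{A}_{w-1,w}^{\theta_0}$ nonnegative with kernel equal to the constants; with the constant $1$ the coefficient $p-2$ becomes $0$ and rigidity disappears. The $L^2$ remainder in Corollary~\ref{impro} cannot close this $O(1)$ gap in the coefficient.

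Two smaller points. First, the auxiliary operator produced by Theorem~\ref{commutator} with $w=w'$ is $\mathcal{A}_{w-1,w}^{\theta_0}$, an \emph{asymmetric} intertwining operator of order $2(\gamma-1)$; it is not the symmetric $\mathcal{A}_{2(\gamma-1)}^{\theta_0}$ as your notation suggests, and its spectrum $\lambda_j(w-1)\lambda_k(w)$ is not symmetric in $(j,k)$, which matters when you verify positivity. Second, in the existence step, Corollary~\ref{impro} is not needed to bound the sequence in $S^{\gamma,2}$ (that is automatic from minimality); its role is to force the weak limit to be nonzero, ruling out concentration. To pass from a nonzero weak limit to an actual minimizer with strong convergence you still need the Br\'ezis--Lieb/Lieb decomposition $\|F_i\|_p^p=\|F\|_p^p+\|F_i-F\|_p^p+o(1)$ combined with the sharp inequality applied to $F_i-F$, which your sketch omits.

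To repair the proof: keep the existence stage as in the paper (balance via CR automorphisms, use Corollary~\ref{impro} to show the weak limit is nonzero, then Lieb's lemma for strong convergence), then derive the spectral estimate from the second variation of $A_\gamma^{\theta_0}$ at the balanced local minimizer, and only then apply the commutator identity and the spectral positivity computation. This is the structure of the paper's proof, and it is the second variation, not the combination of the sharp inequality with Jensen, that carries the quantitative content.
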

The key ideas in the proof of Theorem \ref{class} are the following. After using the CR automorphism group $\mathcal{A}{\textbf{ut}}(S^{2n+1})$, we may assume that any minimizing sequence $\{F_i\}$ in $S^{\gamma,2}(S^{2n+1})$ is balanced \cite{FL10}. By combining Corollary \ref{impro} with a trick due to Lieb \cite{Lieb83}, we show that any balanced minimizing sequence $\{F_i\}$ has a subsequence which converges strongly to an optimizer $F\in S^{\gamma,2}(S^{2n+1})$ of (\ref{sharpSo}). Next, we follow the Frank--Lieb argument developed in \cite{Case19} to classify the optimizers. The Frank--Lieb argument involves three elements. First, the assumption of a local miminizer implies, via the second variation, a nice spectral estimate; see Proposition \ref{spectral} for a precise statement. Second, CR covariance implies that one can assume that the optimizer $F$ satisfies the balanced condition (\ref{balanced}), and in particular use first spherical harmonics as test functions in the previous spectral estimate. Third, by Theorem \ref{commutator}, one deduces that a balanced positive local minimizer is constant. 

Hang and Wang have announced a new proof of sharp CR Sobolev inequalities using a scheme of subcritical approximation \cite{HW22} which is quite different from the approach in this article.

Our methods also give a new proof of Beckner's sharp Sobolev inequalities on the sphere \cite{Beckner93} which is more direct than the argument of Frank and Lieb \cite{FL12a}.

\begin{theorem}\label{Beckner}
Let $\Delta_{S^n}$ be the Laplace--Beltrame operator on the standard sphere $(S^n, g_c)$. For $0<\gamma<\frac{n}{2}$, classical intertwining operators $P_{2\gamma}^{g_c}$ on $S^n$ are defined by
\begin{equation}\label{explicitform}
        P_{2\gamma}^{g_c}=\frac{\Gamma(B+\frac{1}{2}+\gamma)}{\Gamma(B+\frac{1}{2}-\gamma)}, \quad \quad  B=\sqrt{-\Delta_{S^n}+\left(\frac{n-1}{2}\right)^2}.
\end{equation}
Then for all $F\in W^{\gamma,2}(S^n)$,
\begin{equation}\label{classicalHLS}
    \frac{\Gamma(\frac{n+2\gamma}{2})}{\Gamma(\frac{n-2\gamma}{2})}\omega_n^{\frac{2\gamma}{n}}\left(\int_{S^n}|F|^{\frac{2n}{n-2\gamma}}d\sigma\right)^{\frac{n-2\gamma}{n}}\leqslant \int_{S^n}FP_{2\gamma}^{g_c}(F) d\sigma, 
\end{equation}
where $\omega_n$ is the volume of $S^n$ and $d\sigma$ is the Lebesgue measure on $S^n$. Equality holds if and only if
\begin{equation}
    F(\eta)=C|1-\langle\eta, \xi\rangle|^{\frac{2\gamma-n}{2}}, \quad \eta\in S^n
\end{equation}
for some $C\in \R$, $\xi\in \R^{n+1}$, $|\xi|<1$.
\end{theorem}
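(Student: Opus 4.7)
The plan is to mirror the argument for Theorem \ref{class}, replacing the CR intertwining operators $\mathcal{A}_{2\gamma}^{\theta_0}$ on $S^{2n+1}$ by the classical intertwining operators $P_{2\gamma}^{g_c}$ on $S^n$, and the CR automorphism group by the conformal group of $S^n$. Two ingredients parallel to the CR case need to be developed first.

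The first ingredient is an improved Sobolev inequality on $S^n$ for balanced functions, in the spirit of Corollary \ref{impro}: for every $\eps > 0$ and every $F \in W^{\gamma,2}(S^n)$ satisfying $\int_{S^n} \eta_i |F|^{2n/(n-2\gamma)}\, d\sigma = 0$ for $i = 1, \ldots, n+1$, one should have
\begin{equation*}
\left(\int_{S^n}|F|^{\frac{2n}{n-2\gamma}}\,d\sigma\right)^{\frac{n-2\gamma}{n}} \leqslant (\beta + \eps)\int_{S^n}F P_{2\gamma}^{g_c}(F)\,d\sigma + C(\eps)\int_{S^n}|F|^2\,d\sigma,
\end{equation*}
where $\beta$ is strictly smaller than the sharp Sobolev constant. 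This is proved by the same contradiction argument as Theorem \ref{improve}: assuming failure, a near-extremal sequence converges weakly to zero in $L^2(S^n)$, and then the concentration compactness principle \cite{Lions85} together with the balanced condition forces the limiting measure to split into at least two atoms, yielding $\Theta(1,0;(n-2\gamma)/n,n)=2^{2\gamma/n}$ as in \cite{HW21} and contradicting the sharpness of the constant.

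The second ingredient is a commutator identity for $P_{2\gamma}^{g_c}$. Using the explicit factorization (\ref{explicitform}) together with the decomposition of $L^2(S^n)$ into spherical harmonics, multiplication by a coordinate $x_j$ shifts the degree by one, so a direct eigenvalue computation on each irreducible component yields an identity of the shape
\begin{equation*}
\sum_{j=1}^{n+1}x_j\,[P_{2\gamma}^{g_c}, x_j] = c_{n,\gamma}\, P_{2\gamma-2}^{g_c}
\end{equation*}
for an explicit positive constant $c_{n,\gamma}$, in direct analogy with Theorem \ref{commutator}.

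With these two ingredients, the theorem follows the three-step pattern of Theorem \ref{class}. First, by conformal transformations any minimizing sequence may be assumed balanced \cite{FL10}; combined with the improved Sobolev inequality and Lieb's compactness trick \cite{Lieb83}, this yields a positive minimizer $F \in W^{\gamma,2}(S^n)$, and evaluating on the constant function fixes the sharp constant in terms of $\omega_n$ and the gamma factors. Second, the second variation at $F$ supplies a spectral estimate for the linearized operator analogous to Proposition \ref{spectral}. Third, by conformal covariance one may further assume $F$ itself is balanced, so that $x_1, \ldots, x_{n+1}$ are admissible test functions in the spectral estimate; summing the resulting inequalities over $j$ and applying the commutator identity forces $F$ to be constant, and the general extremal family is then obtained by pulling the constant function back through a conformal transformation of $S^n$. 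The main obstacle is the last step: the spectral bound from the second variation must match the constant $c_{n,\gamma}$ exactly so that rigidity is forced on the nose, which amounts to an algebraic check across the shift $\gamma \mapsto \gamma - 1$ of the Euler factors $\Gamma(B + \tfrac{1}{2} \pm \gamma)$ appearing in (\ref{explicitform}).
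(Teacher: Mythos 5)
Your proposal matches the paper's argument in Section 5 for the classical sphere almost exactly: the commutator identity you sketch is precisely Theorem \ref{classicalcom}, which the paper proves by the same direct eigenvalue computation on spherical harmonics (using the decomposition of $x_j\mathcal{Y}_h$ into harmonics of degree $h\pm 1$ and the eigenvalues $\mu_h(2\gamma)=\Gamma(h+\tfrac{n}{2}+\gamma)/\Gamma(h+\tfrac{n}{2}-\gamma)$), and the balance/second-variation/commutator chain you lay out is exactly the paper's proof of the classification, including the closing algebraic check that you flag as the main obstacle, which the paper resolves via $(p-2)P^{g_c}_{2\gamma}-\gamma(n+2\gamma-2)P^{g_c}_{2(\gamma-1)}=\tfrac{4\gamma}{n-2\gamma}(-\Delta_{S^n})P^{g_c}_{2(\gamma-1)}$. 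The one place your proposal goes beyond the paper is the existence step: Section 5 simply cites \cite{FL12a} for the existence and positivity of the optimizer and does not rederive an improved Sobolev inequality or a concentration-compactness lemma on $S^n$, whereas you propose a self-contained existence argument in the spirit of Section 4; this is reasonable and consistent with the CR side, but it would require the $W^{\gamma,2}(S^n)$ analogue of Lemma \ref{ccp} for fractional $\gamma$ rather than just quoting the $W^{1,p}$ result of \cite{HW21}.
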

The main ingredient in our new proof of Theorem \ref{Beckner} is a commutator identity for the intertwining operators on $S^n$ which is already known when $\gamma\in \N$ \cite{Case19}.

\begin{theorem}\label{classicalcom}
    Let $(S^n, g_c)$ be the standard sphere with the canonical metric $g_c$ and $P_{2\gamma}^{g_c}$ be the intertwining operator of order $2\gamma$. Then
    \begin{equation}
\sum_{j=1}^{n+1}x_j\left[P_{2\gamma}^{g_c},x_j\right]=\gamma(n+2\gamma-2)P_{2(\gamma-1)}^{g_c},
    \end{equation}
where $x_1,\cdots, x_{n+1}$ are the standard coordinates on $\R^{n+1}$ and $P_{2(\gamma-1)}^{g_c}$ should be regarded as $\left(P_{2(1-\gamma)}^{g_c}\right)^{-1}$ for $0<\gamma<1$.
\end{theorem}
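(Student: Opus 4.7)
The plan is to verify the identity on each eigenspace of $P_{2\gamma}^{g_c}$. Since $-\Delta_{S^n}$ acts as $k(k+n-1)$ on the space $\mathcal{H}_k$ of degree-$k$ spherical harmonics, (\ref{explicitform}) gives $P_{2\gamma}^{g_c}\big|_{\mathcal{H}_k} = \lambda_k(\gamma)$ with $\lambda_k(\gamma) := \Gamma(k + \tfrac{n}{2} + \gamma)/\Gamma(k + \tfrac{n}{2} - \gamma)$. The theorem then reduces to the scalar identity
\[
\sum_{j=1}^{n+1} x_j\, [P_{2\gamma}^{g_c}, x_j]\, Y_k = \gamma(n+2\gamma-2)\,\lambda_k(\gamma-1)\, Y_k \qquad \text{for every } Y_k \in \mathcal{H}_k,
\]
with the convention $\lambda_k(\gamma-1) = 1/\lambda_k(1-\gamma)$ handling the case $0<\gamma<1$ consistently with $P_{2(\gamma-1)}^{g_c} = (P_{2(1-\gamma)}^{g_c})^{-1}$.

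The main ingredient is the harmonic decomposition $x_j Y_k = H_{k+1}^{(j)} + |x|^2 H_{k-1}^{(j)}$ on $\R^{n+1}$, with $H_{k\pm1}^{(j)} \in \mathcal{H}_{k\pm1}$. Applying $\Delta$ and using $\Delta(x_j Y_k) = 2\partial_j Y_k$ gives $H_{k-1}^{(j)} = (n+2k-1)^{-1} \partial_j Y_k$, so on $S^n$
\[
[P_{2\gamma}^{g_c}, x_j] Y_k = (\lambda_{k+1}(\gamma)-\lambda_k(\gamma))\,H_{k+1}^{(j)} + (\lambda_{k-1}(\gamma)-\lambda_k(\gamma))\,H_{k-1}^{(j)}.
\]
Euler's identity $\sum_j x_j \partial_j Y_k = k\,Y_k$ together with $\sum_j x_j^2 = 1$ on $S^n$ then yield
\[
\sum_{j=1}^{n+1} x_j H_{k-1}^{(j)} = \frac{k}{n+2k-1}\,Y_k, \qquad \sum_{j=1}^{n+1} x_j H_{k+1}^{(j)}\Big|_{S^n} = \frac{n+k-1}{n+2k-1}\,Y_k.
\]
The crucial observation is that although $x_j H_{k\pm1}^{(j)}$ individually contains pieces in $\mathcal{H}_{k+2}$ or $\mathcal{H}_{k-2}$, these off-level components cancel after summing in $j$, so the commutator sum lies entirely in $\mathcal{H}_k$.

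The remaining step is gamma-function bookkeeping. The shift relations $\lambda_{k+1}(\gamma)/\lambda_k(\gamma) = (k+\tfrac{n}{2}+\gamma)/(k+\tfrac{n}{2}-\gamma)$ and $\lambda_{k-1}(\gamma)/\lambda_k(\gamma) = (k+\tfrac{n}{2}-1-\gamma)/(k+\tfrac{n}{2}-1+\gamma)$ reduce the commutator to
\[
\sum_j x_j [P_{2\gamma}^{g_c}, x_j]\, Y_k = \frac{2\gamma\,\lambda_k(\gamma)\bigl[(n+k-1)(k+\tfrac{n}{2}-1+\gamma) - k(k+\tfrac{n}{2}-\gamma)\bigr]}{(n+2k-1)(k+\tfrac{n}{2}-\gamma)(k+\tfrac{n}{2}-1+\gamma)}\, Y_k,
\]
and the bracket in the numerator expands to $\tfrac{1}{2}(n+2k-1)(n+2\gamma-2)$. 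Combined with $\lambda_k(\gamma) = (k+\tfrac{n}{2}-\gamma)(k+\tfrac{n}{2}-1+\gamma)\,\lambda_k(\gamma-1)$ from the $\Gamma$-functional equation, this completes the identification. The genuine obstacle is this algebra; the conceptual step, the cancellation that confines the commutator sum to $\mathcal{H}_k$, is elementary once the harmonic decomposition is in hand.
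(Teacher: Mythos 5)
Your proof is correct and follows essentially the same route as the paper: both diagonalize $P_{2\gamma}^{g_c}$ via its eigenvalues $\lambda_k(\gamma)=\Gamma(k+\tfrac{n}{2}+\gamma)/\Gamma(k+\tfrac{n}{2}-\gamma)$, decompose $x_jY_k$ into $\mathcal{H}_{k+1}\oplus|x|^2\mathcal{H}_{k-1}$ with $H_{k-1}^{(j)}=(n+2k-1)^{-1}\partial_j Y_k$, use Euler's identity to collapse $\sum_j x_j H_{k\pm1}^{(j)}$ to multiples of $Y_k$, and then verify the scalar identity by the $\Gamma$-functional equation. The only cosmetic difference is that the paper records explicitly the density and self-adjointness argument extending the operator identity from $\mathrm{Span}\{\mathcal{H}^{\S}_h\}$ to all of $C^\infty(S^n)$, a step you leave implicit.
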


This article is organized as follows.

In Section 2, we review some basic concepts on CR sphere, spherical harmonics and CR intertwining operators. In Section 3, we prove Theorem \ref{improve} following the idea in \cite{HW21}. Section 4 deals with the commutator identity and spectral estimate needed for the Frank--Lieb argument. As a direct application, we give a
new proof of sharp CR Sobolev inequalities on $S^{2n+1}$. In Section 5, we provide the proof of Theorem \ref{classicalcom} and a new proof of classical sharp Sobolev inequalities (\ref{classicalHLS}).

{\bf{Acknowledgements.}} I would like to express my deep gratitude to my advisor, Prof. Jeffrey S. Case, for the patient guidance and useful critiques of this work. I also would like to thank Yu Gao, Nan Wu and Xingyu Zhu for useful discussion.

\section{Preliminaries}
\subsection{CR sphere and spherical harmonics}
Let $S^{2n+1}\subset \C^{n+1}$ be the unit sphere centered at the origin with the canonical holomorphic tangent space $T^{1,0}S^{2n+1}$ and the contact form $\theta_0$ given by
\begin{equation*}
\begin{split}
    &T^{1,0}S^{2n+1}=T^{1,0}\C^{n+1}\cap \C TS^{2n+1}, \\
    &\theta_0=\frac{i}{2}\sum_{i=1}^{n+1}\left(z^i d\bar{z}^i-\bar{z}^i dz^i\right)\big|_{S^{2n+1}}.
    \end{split}
\end{equation*}
On $(S^{2n+1}, T^{1,0}S^{2n+1}, \theta_0)$, the sublaplacian is defined as
\begin{equation*}
    \mathcal{L}=-\frac{1}{2}\sum_{j=1}^{n+1} \left(T_j\overline{T}_j+\overline{T}_jT_j\right),
\end{equation*}
where
\begin{equation*}
    T_j=\frac{\partial}{\partial z_j}-\bar{z}_j \sum_{k=1}^{n+1}z_k\frac{\partial}{\partial z_k}, \quad \quad \overline{T}_j=\frac{\partial}{\partial \bar{z}_j}-z_j \sum_{k=1}^{n+1}\bar{z}_k\frac{\partial}{\partial \bar{z}_k},
\end{equation*}
and the conformal sublaplacian is defined as 
\begin{equation*}
    \mathcal{D}=\mathcal{L}+\frac{n^2}{4}.
\end{equation*}

Let $\mathcal{P}$ denote the space of complex-valued polynomials on $\C^{n+1}$ and $\mathcal{H}$ be the subspace of harmonic polynomials. For all $h\in \N_0$, let $\mathcal{P}_h\subset \mathcal{P}$ denote the space of homogeneous polynomials of degree $h$ and set $\mathcal{H}_h=\mathcal{P}_h\cap \mathcal{H}$. Clearly, we have the following decomposition
\begin{equation*}
    \mathcal{P}=\bigoplus_{h\in \N}\mathcal{P}_h, \quad \quad \mathcal{H}=\bigoplus_{h\in \N}\mathcal{H}_h.
\end{equation*}
It is well-known that 
\begin{equation}
    \mathcal{P}_h=\mathcal{H}_h\oplus |z|^2\mathcal{P}_{h-2} \quad \quad ~~\mbox{for all}~~ h\in \N_0,
\end{equation}
where $z=(z_1,\cdots, z_{n+1})\in \C^{n+1}$ and $|z|^2=z\cdot \bar{z}$. Given $j,k\in \N$, we may then define $\PP_{j,k}$ to be the space of homogeneous polynomials on $\C^{n+1}$ with bidegree $(j,k)$, i.e., for $g\in \PP_{j,k}$, we have $Zg=jg$ and $\overline{Z}g=kg$, where $Z$ and $\overline{Z}$ are the holomorphic and antiholomorphic Euler fields given by
\begin{equation}
    Z=\sum_{l=1}^{n+1}z_l\frac{\partial}{\partial z_l}, \quad \quad \overline{Z}=\sum_{l=1}^{n+1}\bar{z}_l\frac{\partial}{\partial \bar{z}_l},
\end{equation}
respectively. Obviously, we have
\begin{equation*}
    \mathcal{P}_h=\bigoplus_{j+k=h} \mathcal{P}_{j,k}, \quad \quad \mathcal{H}_h=\bigoplus_{j+k=h} \mathcal{H}_{j,k},
\end{equation*}
where $\mathcal{H}_{j,k}=\mathcal{H}_{h}\cap \mathcal{P}_{j,k}$.

By the Stone--Weierstrass theorem, we know that the space $L^2(S^{2n+1})$ endowed with the inner product
\begin{equation*}
    (F,G)=\int_{S^{2n+1}}F\overline{G}d\xi, \quad \quad d\xi=\theta_0\wedge (d\theta_0)^n,
\end{equation*}
can be decomposed as 
\begin{equation*}
    L^2(S^{2n+1})=\overline{\bigoplus_{j,k\in\N_0}\mathcal{H}^{\S}_{j,k}},
\end{equation*} 
where $\mathcal{H}^{\S}_{j,k}$ is the restriction of $\mathcal{H}_{j,k}$ to the sphere. 
Through this paper, the superscript $\S$ will be used to denote the sets of restrictions to $S^{2n+1}$. The dimension of $\mathcal{H}^{\S}_{j,k}$ \cite{AC16} is 
\begin{equation*}
    \mathrm{dim} (\mathcal{H}^{\S}_{j,k})=m_{j,k}:=\frac{(j+n-1)!(k+n-1)!(j+k+n)!}{n!(n-1)!j!k!},
\end{equation*}
and if $\{Y_{j,k}^l\}_{l=1}^{m_{j,k}}$ is an orthonormal basis of $\mathcal{H}^{\S}_{j,k}$, then the zonal harmonics are defined as
\begin{equation*}
    \Phi_{j,k}(\zeta,\eta)=\sum_{l=1}^{m_{j,k}}Y_{j,k}^l(\zeta)\overline{Y_{j,k}^l(\eta)}.
\end{equation*}
It is known from \cite{BFM07} that
\begin{equation}\label{zonal}
    \Phi_{j,k}(\zeta,\eta)=\Phi_{j,k}(\bar{\zeta}\cdot\eta):=\frac{(k+n-1)!(j+k+n)}{\omega_{2n+1}n!k!}\left(\bar{\zeta}\cdot \eta\right)^{k-j}P_j^{(n-1,k-j)}(2|\bar{\zeta}\cdot \eta|-1)
\end{equation}
if $j\leqslant k$, and $\Phi_{j,k}(\zeta,\eta):=\overline{\Phi_{k,j}(\zeta,\eta)}$ if $k\leqslant j$, where $P_{n}^{(\alpha,\beta)}$ are the Jacobi polynomials.

\subsection{Folland--Stein spaces and intertwining operators on CR sphere} The Folland--Stein spaces on $S^{2n+1}$ can be defined in terms of the powers of the conformal subplacian; see \cite{ACDB04,ADB06,BFM07,Folland75} for more details. We summarize the main properties below. 

It is well-known that for $Y_{j,k}\in \mathcal{H}^{\S}_{j,k}$,
\begin{equation}\label{decom}
    \mathcal{D}Y_{j,k}=\lambda_j \lambda_{k}Y_{j,k}, \quad \lambda_j=j+\frac{n}{2}.
\end{equation}
For $F\in L^2(S^{2n+1})$, we can write
\begin{equation}\label{expansion}
    F=\sum_{j,k\in \N_0}\sum_{l=1}^{m_{j,k}} c_{j,k}^l(F)Y^l_{j,k}, \quad c_{j,k}^l(F)=\int_{S^{2n+1}}F\overline{Y}^l_{j,k}d\xi;
\end{equation}
in particular, for $F\in C^{\infty}(S^{2n+1})$ and any $\gamma\in \R_+$, (\ref{decom}) implies that
\begin{equation}\label{finite}
    \sum_{j,k\in \N_0}\sum_{l=1}^{m_{j,k}} \left(\lambda_j \lambda_{k}\right)^\gamma |c_{j,k}^l(F)|^2< \infty.
\end{equation}
For $F\in C^{\infty}(S^{2n+1})$ and any $\gamma\in \R_+$, we define
\begin{equation}\label{Ddefi}
    \mathcal{D}^{\frac{\gamma}{2}}F=\sum_{j,k\in \N_0}\sum_{l=1}^{m_{j,k}} \left(\lambda_j \lambda_{k}\right)^{\frac{\gamma}{2}}c_{j,k}^l(F)Y^l_{j,k}, 
\end{equation}
so that $\mathcal{D}^{\frac{\gamma}{2}}$ extends naturally to the space of distributions on the sphere. For $\gamma>0$, $p\geqslant 1$, we let
\begin{equation*}
    S^{\gamma,p}=\left\{F\in L^p: \mathcal{D}^{\frac{\gamma}{2}}F\in L^p\right\},
\end{equation*}
endowed with norm
\begin{equation*}
    \|F\|_{S^{\gamma,p}}=\|\mathcal{D}^{\frac{\gamma}{2}}F\|_p;
\end{equation*}
the space $S^{\gamma,p}$ is the completion of $C^{\infty}(S^{2n+1})$ this norm.

$S^{\gamma,2}$ is the space of $F$ in $L^2$ so that (\ref{finite}) holds. It is a Hilbert space with inner product and norm
\begin{equation*}
    (F,G)_{S^{\gamma,2}}=\int_{S^{2n+1}}\mathcal{D}^{\frac{\gamma}{2}}F \overline{\mathcal{D}^{\frac{\gamma}{2}}G}d\xi, \quad \|F\|_{S^{\gamma,2}}=(F,F)^{\frac{1}{2}}_{S^{\gamma,2}}.
\end{equation*}

The group $SU(n+1,1)$ acts as a group of CR automorphisms on $S^{2n+1}$, and therefore on $\H^n$ by means of Cayley transform. Recall that a CR automorphism is a diffeomorphism $\tau:S^{2n+1}\to S^{2n+1}$ that preserves the contact form; i.e., $\tau^* \theta_0=|J_{\tau}|^2\theta_0$, where the function $J_{\tau}$ is defined by 
\begin{equation*}
    \tau^{*}\left(dz_1\wedge\cdots \wedge dz_{n+1}\right)=J_{\tau}^{n+2}dz_1\wedge\cdots \wedge dz_{n+1}
\end{equation*}
such that $|J_{\tau}|^{2n+2}$ is the Jacobian determinant of $\tau$. Denote the CR automorphism group of $S^{2n+1}$ by $\mathcal{A}{\textbf{ut}}(S^{2n+1})$. The functions $|J_{\tau}|$ with $\tau\in \mathcal{A}{\textbf{ut}}(S^{2n+1})$ can be parametrized as
\begin{equation}
   |J_{\tau}(\zeta)|=\frac{C}{|1-\xi \cdot \bar{\zeta}|}, \quad \zeta\in S^{2n+1}, C\in \C, \xi\in \C^{n+1}, |\xi|<1.
\end{equation}

The conformal sublaplacian $\mathcal{D}$ is intertwining in the sense that for any $F\in C^{\infty}(S^{2n+1})$, $\tau\in \mathcal{A}{\textbf{ut}}(S^{2n+1})$,
\begin{equation*}
|J_{\tau}|^{\frac{Q+2}{2}}\left(\mathcal{D}F\right)\circ \tau=\mathcal{D}\left(|J_{\tau}|^{\frac{Q-2}{2}}\left(F\circ \tau\right)\right), \quad Q=2n+2.
\end{equation*}
For $0<\gamma<\frac{Q}{2}$, let $(w,w')\in \R^2$ such that $w+w'+n+1=\gamma$. The general intertwining operator $\mathcal{A}_{w,w'}^{\theta_0}$ of order $2\gamma$ is defined by the following property: for any $F\in C^{\infty}(S^{2n+1})$, $\tau\in \mathcal{A}{\textbf{ut}}(S^{2n+1})$,
\begin{equation}\label{intertwining}
    J_{\tau}^{\gamma-w}\bar{J}_{\tau}^{\gamma-w'}\left(\mathcal{A}_{w,w'}^{\theta_0}F\right)\circ \tau= \mathcal{A}_{w,w'}^{\theta_0}\left(J_{\tau}^{-w}\bar{J}_{\tau}^{-w'}\left(F\circ \tau\right)\right)
\end{equation}
In other words, the pullback of $\mathcal{A}_{w,w'}^{\theta_0}$ by a CR automorphism $\tau$ satisfies
\begin{equation*}
    \tau^* \mathcal{A}_{w,w'}^{\theta_0} \left(\tau^{-1}\right)^*=J_{\tau}^{-\gamma+w}\bar{J}_{\tau}^{-\gamma+w'}\mathcal{A}_{w,w'}^{\theta_0}J_{\tau}^{-w}\bar{J}_{\tau}^{-w'}, \quad \tau^*F=F\circ \tau.
\end{equation*}

In the same spirit as \cite{BFM07}, for the reader's sake in Appendix A we offer a seld-contained proof of the spectral characterization of intertwining operators. It is known from Proposition \ref{generalclass} that a self-adjoint operator satisfying (\ref{intertwining}) is diagonal with respect to the spherical harmonics, and its spectrum is completely determined up to a multiplicative constant by the functions
\begin{equation}\label{spectrum}
     \lambda_j(w)=\frac{\Gamma(j+\gamma-w)}{\Gamma(j-w)}
\end{equation}
in the sense that up to constant the spectrum is precisely $\{\lambda_j(w)\lambda_{k}(w')\}$. From now on we will choose such constant to be $1$; i.e., $\mathcal{A}_{w,w'}^{\theta_0}$ will be the operator on $C^{\infty}(S^{2n+1})$ such that
\begin{equation}\label{form}
\mathcal{A}_{w,w'}^{\theta_0}Y_{j,k}=\lambda_j(w)\lambda_{k}(w')Y_{j,k},\quad   Y_{j,k}\in \mathcal{H}^{\S}_{j,k}.
\end{equation}
Observe that when $w=w'$, $\mathcal{A}_{w,w'}^{\theta_0}=\mathcal{A}_{2\gamma}^{\theta_0}$ are intertwining operators characterized in \cite[Proposition A.1]{BFM07}. In particular, in the case $\gamma=1$, we have $\lambda_j(-\frac{n}{2})=j+\frac{n}{2}$, and we recover the conformal sublaplacian; i.e., $\mathcal{A}_{2}^{\theta_0}=\mathcal{D}$.


\section{Improved CR Sobolev inequalities}
In this section, we improve the CR Sobolev inequalities on $S^{2n+1}$ under the vanishing of higher order moments of the volume elements. We start from the concentration compactness principle on CR unit sphere. 

\begin{lemma}\label{ccp}
Let $(S^{2n+1}, T^{1,0}S^{2n+1}, \theta_0)$ be the CR unit sphere with the volume element $d\xi=\theta_0\wedge (d\theta_0)^n$. We assume that $F_i$ is a sequence of complex-valued functions bounded in $S^{\gamma,2}(S^{2n+1})$, $\gamma\in (0,n+1)$ and $p=\frac{2Q}{Q-2\gamma}$, such that $F_i\rightharpoonup F$ weakly in $S^{\gamma,2}(S^{2n+1})$. Moreover, we assume as measures, 
\begin{equation}
|F_i|^p d\xi \to |F|^pd\xi+\nu, \quad \bar{F_i}\mathcal{A}_{2\gamma}^{\theta_0}(F_i) d\xi \to \bar{F}\mathcal{A}_{2\gamma}^{\theta_0}(F) d\xi+\sigma.
\end{equation}
Then we can find countably many points $\{x_i\}\subset S^{2n+1}$ such that
\begin{displaymath}
    \nu=\sum_{i}\nu_i \delta_{x_i}, \quad \nu_i^{\frac{2}{p}}\leqslant C_{n,2\gamma} \sigma_i
\end{displaymath}
where $\nu_i=\nu(x_i)$ and $\sigma_i=\sigma(x_i)$.
\end{lemma}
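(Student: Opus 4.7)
The plan is to adapt Lions' second concentration--compactness principle to the CR sphere, substituting the sharp inequality (\ref{sharpSo}) for the Euclidean sharp Sobolev embedding. First I would reduce to the case $F=0$: setting $G_i=F_i-F$, one has $G_i\rightharpoonup 0$ in $S^{\gamma,2}(S^{2n+1})$, and the Brezis--Lieb lemma (applied once to the $L^p$ norm and once to the quadratic form $F\mapsto \int \bar F\,\mathcal{A}_{2\gamma}^{\theta_0}(F)\,d\xi$) lets me replace the defect measures $\nu$ and $\sigma$ by the weak-$*$ limits of $|G_i|^p\,d\xi$ and $\bar G_i\,\mathcal{A}_{2\gamma}^{\theta_0}(G_i)\,d\xi$, respectively.

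The core step is a test-function argument. For each $\phi\in C^\infty(S^{2n+1})$ I apply (\ref{sharpSo}) to $\phi G_i$:
\begin{equation*}
\left(\int_{S^{2n+1}}|\phi|^p|G_i|^p\,d\xi\right)^{2/p}\leq C_{n,2\gamma}\int_{S^{2n+1}}\overline{\phi G_i}\,\mathcal{A}_{2\gamma}^{\theta_0}(\phi G_i)\,d\xi.
\end{equation*}
The left side converges to $\left(\int|\phi|^p\,d\nu\right)^{2/p}$. On the right, the Leibniz-type decomposition $\mathcal{A}_{2\gamma}^{\theta_0}(\phi G_i)=\phi\,\mathcal{A}_{2\gamma}^{\theta_0}(G_i)+[\mathcal{A}_{2\gamma}^{\theta_0},\phi]G_i$ splits the integrand into a main piece whose limit is $\int|\phi|^2\,d\sigma$ plus a commutator remainder. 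The main obstacle is showing the remainder tends to zero. My plan is to invoke the Heisenberg pseudodifferential calculus: $\mathcal{A}_{2\gamma}^{\theta_0}$ has Heisenberg order $2\gamma$ while multiplication by the smooth symbol $\phi$ has order $0$, so $[\mathcal{A}_{2\gamma}^{\theta_0},\phi]$ has order at most $2\gamma-1$ and therefore maps $S^{\gamma,2}\to S^{-\gamma+1,2}$ boundedly. Composing with the compact CR Rellich embedding $S^{-\gamma+1,2}\hookrightarrow S^{-\gamma,2}$ makes this commutator compact from $S^{\gamma,2}$ into $S^{-\gamma,2}$, so the weak convergence $G_i\rightharpoonup 0$ yields $[\mathcal{A}_{2\gamma}^{\theta_0},\phi]G_i\to 0$ strongly in $S^{-\gamma,2}$; since $\bar\phi\bar G_i$ is bounded in $S^{\gamma,2}$, the duality pairing vanishes in the limit. (As a fallback one can verify this directly from the spectral description (\ref{form}), since multiplication by $\phi$ in spherical harmonics has uniformly bounded matrix coefficients and the symbol of the commutator is of lower order.)

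Combining these steps gives the reverse-H\"older-type estimate
\begin{equation*}
\left(\int_{S^{2n+1}}|\phi|^p\,d\nu\right)^{2/p}\leq C_{n,2\gamma}\int_{S^{2n+1}}|\phi|^2\,d\sigma,\qquad \phi\in C^\infty(S^{2n+1}).
\end{equation*}
A standard atomization argument then concludes the proof: approximating indicator functions of small CR balls $B_r(x_0)$ by cutoffs one obtains $\nu(B_r(x_0))\leq C_{n,2\gamma}^{\,p/2}\,\sigma(B_r(x_0))^{p/2}$, and since $p/2>1$ and $\sigma$ is finite, a Besicovitch-type differentiation shows that the non-atomic part of $\nu$ must vanish. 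Consequently $\nu=\sum_i\nu_i\delta_{x_i}$ for some countable $\{x_i\}\subset S^{2n+1}$, and evaluating the displayed inequality at each atom yields $\nu_i^{2/p}\leq C_{n,2\gamma}\sigma_i$, as required.
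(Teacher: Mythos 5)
Your proposal is correct and, as far as one can tell, coincides in structure with the proof the paper has in mind (the paper only cites the analogous lemma in \cite{Yan22}): it is Lions' second concentration--compactness principle adapted to the CR sphere, built from (i) the Brezis--Lieb reduction to $G_i=F_i-F\rightharpoonup 0$, (ii) testing the sharp inequality (\ref{sharpSo}) against $\phi G_i$ to obtain the reverse H\"older estimate $\bigl(\int|\phi|^p\,d\nu\bigr)^{2/p}\leq C_{n,2\gamma}\int|\phi|^2\,d\sigma$, and (iii) a standard atomization by differentiation of measures. A few remarks. First, your use of ``Brezis--Lieb for the quadratic form'' is really the Hilbert-space cross-term argument: to upgrade it from convergence of integrals to convergence of measures, one needs that $\int\phi\,\bar G_i\,\mathcal{A}^{\theta_0}_{2\gamma}(F)\,d\xi\to0$ and $\int\phi\,\bar F\,\mathcal{A}^{\theta_0}_{2\gamma}(G_i)\,d\xi\to0$ for every smooth $\phi$; both follow because $G_i\rightharpoonup 0$ in $S^{\gamma,2}$ (paired against $\phi\,\mathcal{A}^{\theta_0}_{2\gamma}(F)\in S^{-\gamma,2}$) and $\mathcal{A}^{\theta_0}_{2\gamma}(G_i)\rightharpoonup 0$ in $S^{-\gamma,2}$ (paired against $\bar\phi F\in S^{\gamma,2}$). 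Second, the genuine technical content for non-integral $\gamma$ is exactly the compactness of the commutator $[\mathcal{A}^{\theta_0}_{2\gamma},\phi]:S^{\gamma,2}\to S^{-\gamma,2}$, which you correctly identify; for integral $\gamma$ (the setting of \cite{Yan22}) the operator is differential and this is elementary, while for fractional $\gamma$ one needs that $\mathcal{A}^{\theta_0}_{2\gamma}$ lies in the Heisenberg pseudodifferential calculus of order $2\gamma$ (via Ponge's functional calculus, \cite{Ponge05}), so that the commutator with a smooth multiplier drops order by one. This step deserves a precise citation or the spherical-harmonic verification you sketch as a fallback. Third, your argument in fact shows $\sigma\geq 0$ as a Radon measure (the reverse H\"older forces $\int|\phi|^2\,d\sigma\geq 0$ for all $\phi$), which is used implicitly later when the paper writes $\sum_i\sigma_i\leq\sigma(S^{2n+1})$; this is a worthwhile observation to record.
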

\begin{proof}
The proof is totally similar to that of Lemma 3.1 in \cite{Yan22}. We leave it to readers.
\end{proof}

\begin{remark}
The dual version of Lemma \ref{ccp} on $\H^n$ was mentioned in \cite{Han13}.
\end{remark}

Now we can prove Theorem \ref{improve} with the help of Lemma \ref{ccp}. 
\begin{proof}[Proof of Theorem \ref{improve}]
Let 
\begin{displaymath}
    \alpha=\frac{{C}_{n,2\gamma}}{\Theta({j,k};\frac{Q-2\gamma}{Q},2n+1)}+\epsilon.
\end{displaymath}
If (\ref{asi}) is not true, then for any $l\in \N$, we can find a $F_l\in S^{\gamma,2}(S^{2n+1})$ such that 
\begin{equation}\label{constraint}
    \int_{S^{2n+1}}g|F|^p d\xi=0
\end{equation}
for all $g\in \overline{\PP}_{j,k}$, and 
\begin{equation}
    \left(\int_{S^{2n+1}}|F_l|^p d\xi\right)^{\frac{2}{p}}> \alpha \int_{S^{2n+1}}\bar{F}_l \mathcal{A}_{2\gamma}^{\theta_0}(F_l) d\xi+l\int_{S^{2n+1}}|F_l|^2 d\xi.
\end{equation}
We may assume
\begin{displaymath}
    \left(\int_{S^{2n+1}}|F_l|^p d\xi\right)^{\frac{2}{p}}=1.
\end{displaymath}
Then
\begin{equation}\label{small}
    \int_{S^{2n+1}}\bar{F}_l \mathcal{A}_{2\gamma}^{\theta_0}(F_l) d\xi<\frac{1}{\alpha}, \quad \int_{S^{2n+1}}|F_l|^2 d\xi<\frac{1}{l}.
\end{equation}
It follows that $F_l\rightharpoonup 0$ weakly in $S^{\gamma,2}(S^{2n+1})$. After passing to a subsequence we have
\begin{equation}
    \bar{F}_l \mathcal{A}_{2\gamma}^{\theta_0}(F_l) d\xi\to \sigma, \quad |F_l|^p d\xi \to \nu.
\end{equation}
By Lemma \ref{ccp} we can find countably many points $\{x_i\}\in S^{2n+1}$ such that
\begin{equation}\label{bubble}
    \nu=\sum_{i}\nu_i \delta_{x_i}, \quad \nu_i^{\frac{2}{p}}\leqslant {C}_{n,2\gamma} \sigma_i,
\end{equation}
where $\nu_i=\nu(x_i)$, $\sigma_i=\sigma(x_i)$. Then
\begin{equation}\label{volume}
    \nu(S^{2n+1})=1,\quad \sigma(S^{2n+1})<\frac{1}{\alpha}.
\end{equation}
It follows from (\ref{constraint}) and (\ref{small}) that $\int_{S^{2n+1}}gd\nu=0$ for all $g\in \overline{\PP}_{j,k}$, hence $\nu\in \mathcal{M}^c_{j,k}(S^{2n+1})$. By definition of $\Theta(j,k;\theta,2n+1)$, (\ref{bubble}) and (\ref{volume}) we have
\begin{displaymath}
    \Theta({j,k};\frac{Q-2\gamma}{Q},2n+1)\leqslant \sum_i \nu_i^{\frac{2}{p}}\leqslant \sum_i {C}_{n,2\gamma} \sigma_i={C}_{n,2\gamma} \sigma(S^{2n+1})\leqslant \frac{{C}_{n,2\gamma}}{\alpha}.
\end{displaymath}
Hence
\begin{displaymath}
    \alpha\leqslant \frac{{C}_{n,2\gamma}}{\Theta({j,k};\frac{Q-2\gamma}{Q},2n+1)}.
\end{displaymath}
This contradicts the choice of $\alpha$.
\end{proof}

\section{Sharp CR Sobolev inequalities}
The main result in this section is the classification of optimizers of the sharp Sobolev inequalities (\ref{sharpSo}). First, by improved Sobolev inequalities Corollary \ref{sharpSo}, we prove the existence of an optimizer.
\begin{proposition}
Denote $Q=2n+2$, $p=\frac{2Q}{Q-2\gamma}$. Then the sharp constant in (\ref{sharpSo}) is attained. Moreover, for any minimizing sequence $\{F_i\}$ there is a subsequence $\{F_{i_m}\}$ and a sequence $\{\Phi_{i_m}\}$ in the CR automorphism group $\mathcal{A}(S^{2n+1})$ of $S^{2n+1}$ such that
\begin{equation}\label{replaced}
    F^{\Phi}_{i_m}=|J_{\Phi_{i_m}}|^{\frac{1}{p}}\Phi^{*}_{i_m}F_{i_m}
\end{equation}
converges strongly in $S^{\gamma,2}(S^{2n+1})$, where $|J_{\Phi_{i_m}}|$ is the determinant of the Jacobian of $\Phi_{i_m}$.
\end{proposition}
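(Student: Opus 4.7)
The plan is to combine three ingredients: a Lieb-type balancing via the CR automorphism group to normalize a minimizing sequence, the improved inequality of Corollary~\ref{impro} to prevent the weak limit from vanishing, and a Brezis--Lieb splitting to upgrade weak convergence in $S^{\gamma,2}(S^{2n+1})$ to strong convergence. The crucial starting observation is that the map $F\mapsto F^\Phi := |J_\Phi|^{1/p}\Phi^{*}F$, for $\Phi\in\mathcal{A}{\textbf{ut}}(S^{2n+1})$, preserves both $\int_{S^{2n+1}}|F|^p\,d\xi$ (change of variables) and $\int_{S^{2n+1}}\bar F\,\mathcal{A}_{2\gamma}^{\theta_0}(F)\,d\xi$ (the intertwining property (\ref{intertwining}) with $w=w'$), so the Sobolev quotient is $\mathcal{A}{\textbf{ut}}(S^{2n+1})$-invariant and the modified sequence remains minimizing for any choice of $\{\Phi_i\}$.

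First I would perform the balancing. For each $F_i$ normalized so that $\|F_i\|_p=1$, I seek $\Phi_i\in\mathcal{A}{\textbf{ut}}(S^{2n+1})$ such that the probability measure $|F_i^{\Phi}|^p\,d\xi$ has vanishing first moments, i.e.
\begin{displaymath}
\int_{S^{2n+1}}\xi_j\,|F_i^{\Phi}|^p\,d\xi=0,\qquad j=1,\ldots,2n+2.
\end{displaymath}
The standard Frank--Lieb argument parameterizes the coset space $\mathcal{A}{\textbf{ut}}(S^{2n+1})/U(n+1)$ by the open complex unit ball $\{\xi\in\C^{n+1}:|\xi|<1\}$ and considers the map sending a parameter to the renormalized first moment of $|F_i^{\Phi_\xi}|^p\,d\xi$; this map extends continuously to the closed ball with boundary behavior forcing a zero via a Brouwer-degree argument. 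After this normalization the sequence $\{F_i^{\Phi}\}$ is bounded in $S^{\gamma,2}(S^{2n+1})$, so after extracting a subsequence one has $F_{i_m}^{\Phi}\rightharpoonup F$ weakly in $S^{\gamma,2}$ and strongly in $L^2$ via the compact embedding $S^{\gamma,2}\hookrightarrow L^2$; one may also arrange pointwise a.e.\ convergence for a further subsequence using a subcritical Sobolev embedding.

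Next I would rule out the vanishing case $F\equiv 0$ using Corollary~\ref{impro}. Since each $F_{i_m}^{\Phi}$ is balanced and $\int \bar F_{i_m}^{\Phi}\mathcal{A}_{2\gamma}^{\theta_0}(F_{i_m}^{\Phi})\,d\xi\to C_{n,2\gamma}^{-1}$, if $F\equiv 0$ then $\|F_{i_m}^{\Phi}\|_2\to 0$ and passing to the limit in the inequality yields
\begin{displaymath}
1\leq \left(\frac{C_{n,2\gamma}}{2^{\gamma/(n+1)}}+\epsilon\right)C_{n,2\gamma}^{-1},
\end{displaymath}
which contradicts $2^{\gamma/(n+1)}>1$ once $\epsilon$ is chosen sufficiently small. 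With $F\not\equiv 0$ secured, a Brezis--Lieb decomposition $F_{i_m}^{\Phi}=F+G_m$ (with $G_m\rightharpoonup 0$ in $S^{\gamma,2}$) splits both $\int|\cdot|^p\,d\xi$ and the quadratic form $F\mapsto\int\bar F\mathcal{A}_{2\gamma}^{\theta_0}(F)\,d\xi$ asymptotically (the cross term vanishes by weak convergence). Combining this with the Sobolev inequality applied separately to $F$ and $G_m$, together with the strict inequality $a^{2/p}+b^{2/p}>(a+b)^{2/p}$ for $a,b>0$ and $p>2$, forces $\|G_m\|_p\to 0$, whence $F_{i_m}^{\Phi}\to F$ strongly in $L^p$, $\|F\|_p=1$, and $F$ saturates (\ref{sharpSo}). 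Expanding $\|F_{i_m}^{\Phi}-F\|_{S^{\gamma,2}}^2$ and using the convergence of both $\|F_{i_m}^{\Phi}\|_{S^{\gamma,2}}^2$ and the mixed term to $\|F\|_{S^{\gamma,2}}^2$ finally upgrades weak to strong convergence in $S^{\gamma,2}$.

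The main obstacle is the rigorous execution of the balancing step: one must show that the first-moment map on the parameter ball extends continuously to its closure with sufficiently boundary-preserving behavior that a Brouwer/degree argument applies, uniformly across the sequence. Once balancing is secured, the remainder follows the standard concentration-compactness pipeline, with Corollary~\ref{impro} playing the role of the strict subcritical bound that rules out vanishing, and with the CR covariance of $\mathcal{A}_{2\gamma}^{\theta_0}$ providing the group invariance required to execute the Lieb trick.
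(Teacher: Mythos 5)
Your proposal follows essentially the same strategy as the paper's proof: (i) balance the minimizing sequence by the CR automorphism group, (ii) use Corollary~\ref{impro} to rule out the vanishing of the weak limit, (iii) apply a Brezis--Lieb decomposition together with the strict superadditivity of $t\mapsto t^{2/p}$ to rule out dichotomy and conclude that the weak limit is a minimizer with full $L^p$ mass, and (iv) pass from energy convergence to strong $S^{\gamma,2}$ convergence. The one place you are more ambitious than the paper is the balancing step: you sketch a Brouwer-degree argument to produce the balancing automorphism, whereas the paper simply invokes Lemma~B.1 of Frank--Lieb \cite{FL10}, which already establishes exactly this normalization on the CR sphere; citing that lemma is what makes the balancing step ``secured'' without further work. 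Your contradiction in the vanishing case (taking $F\equiv 0$ and obtaining $1\leq (C_{n,2\gamma}/2^{\gamma/(n+1)}+\epsilon)C_{n,2\gamma}^{-1}$) is a mild reformulation of the paper's direct estimate forcing $\int|F|^2\,d\xi>0$, and the rest of your concentration-compactness pipeline matches the paper step for step.
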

\begin{proof}
By Lemma B.1 in \cite{FL10}, for each $F_i$, there exists an element $\Phi_{i}$ of $\mathcal{A}(S^{2n+1})$ such that $F^{\Phi}_{i}$ defined in (\ref{replaced}) satisfies the balanced conditions (\ref{balanced}). By CR invariance, we may replace $F_i$ by $F^{\Phi}_{i}$ in (\ref{sharpSo}). Thus, $\{F^{\Phi}_{i}\}$ is also a minimizing sequence. Passing to a subsequence $\{F_{i_m}\}$, we may assume that $F^{\Phi}_{i_m}\rightharpoonup F$ weakly in $S^{\gamma,2}(S^{2n+1})$ and that $F^{\Phi}_{i_m} \to F$ strongly in $L^2(S^{2n+1})$. 

Without loss of generality, we may assume $\int_{S^{2n+1}}|F^{\Phi}_{i_m}|^p d\xi=1$ for all $i_m$. By Corollary \ref{impro}, we can choose a sufficiently small $\epsilon$ such that as $i_m\to \infty$,
\begin{equation}\label{nonzero}
    \begin{split}
        1=\int_{S^{2n+1}}|F^{\Phi}_{i_m}|^p d\xi&\leqslant \left(\frac{{C}_{n,2\gamma}}{2^{\frac{\gamma}{n+1}}}+\epsilon\right)\int_{S^{2n+1}}\bar{F}^{\Phi}_{i_m} \mathcal{A}_{2\gamma}^{\theta_0}(F^{\Phi}_{i_m}) d\xi +C(\epsilon)\int_{S^{2n+1}}|F^{\Phi}_{i_m}|^2 d\xi,\\
        &\leqslant \left({C}_{n,2\gamma}-\epsilon\right)\int_{S^{2n+1}}\bar{F}^{\Phi}_{i_m} \mathcal{A}_{2\gamma}^{\theta_0}(F^{\Phi}_{i_m}) d\xi+C(\epsilon)\int_{S^{2n+1}}|F^{\Phi}_{i_m}|^2 d\xi,\\
        &\to \left(1-\frac{\epsilon}{{C}_{n,2\gamma}}\right)+C(\epsilon)\int_{S^{2n+1}}|F|^2 d\xi,
    \end{split}
\end{equation}
which implies $F\neq 0$.

By \cite[Lemma2.6]{Lieb83}, we have
\begin{equation}\label{dep}
        1=\|F^{\Phi}_{i_m}\|_p^p=\|F\|_p^p+\|F^{\Phi}_{i_m}-F\|_p^p+o(1),
\end{equation}
Since for $a,b,c\geqslant 0$ and $p> 2$, $\left(a^p+b^p+c^p\right)^{\frac{2}{p}}\leqslant a^2+b^2+c^2$, we have 
\begin{equation}\label{str}
    \begin{split}
        \left(\int_{S^{2n+1}}|F^{\Phi}_{i_m}|^p d\xi\right)^{\frac{2}{p}}&- \left(\int_{S^{2n+1}}|F|^p d\xi\right)^{\frac{2}{p}}\\
        &\leqslant \left(\int_{S^{2n+1}}|F^{\Phi}_{i_m}-F|^p d\xi\right)^{\frac{2}{p}}+o(1)\\
        &\leqslant {C}_{n,2\gamma}\int_{S^{2n+1}}\left(\bar{F}^{\Phi}_{i_m}-\bar{F}\right) \mathcal{A}_{2\gamma}^{\theta_0} \left(F^{\Phi}_{i_m}-F\right) d\xi +o(1)\\
        &\leqslant {C}_{n,2\gamma}\int_{S^{2n+1}}\left(\bar{F}^{\Phi}_{i_m} \mathcal{A}_{2\gamma}^{\theta_0}(F^{\Phi}_{i_m})-\bar{F} \mathcal{A}_{2\gamma}^{\theta_0}(F) \right)d\xi+o(1).
    \end{split}
\end{equation}
Since $\{F^{\Phi}_{i_m}\}$ is a minimizing sequence, as $i_m\to \infty$, we conclude that
\begin{equation}\label{infimum}
    1-\left(\int_{S^{2n+1}}|F|^p d\xi\right)^{\frac{2}{p}}\leqslant 1-{C}_{n,2\gamma}\int_{S^{2n+1}}\bar{F}\mathcal{A}_{2\gamma}^{\theta_0}(F) d\xi.
\end{equation}
This implies that $F$ is a minimizer because $F\neq 0$.

In order to see that the convergence of $\{F^{\Phi}_{i_m}\}$ in $L^p(S^{2n+1})$ is strong, we need to show that $\|F\|_p^p=1$. By the weak convergence and (\ref{dep}), we may assume that $\|F\|_p^p=a\in (0,1]$ and $\lim \|F^{\Phi}_{i_m}-F\|_p^p=1-a$. The fact that $F$ is a minimizer implies equalities in (\ref{str}) in limit; i.e.
\begin{displaymath}
    1-a^{\frac{2}{p}}=(1-a)^{\frac{2}{p}}=1-a^{\frac{2}{p}}.
\end{displaymath}
This gives the conclusion because $1<a^{\frac{2}{p}}+(1-a)^{\frac{2}{p}}$ for $a\in (0,1)$.

Moreover, by (\ref{infimum}), the strong convergence in $L^p(S^{2n+1})$ implies that $\int_{S^{2n+1}}\bar{F} \mathcal{A}_{2\gamma}^{\theta_0}(F) d\xi={C}^{-1}_{n,2\gamma}$. Combining with the fact that
\begin{equation}
    \begin{split}
        &\lim_{i_m\to \infty}\int_{S^{2n+1}}\left(\bar{F}^{\Phi}_{i_m}-\bar{F}\right) \mathcal{A}_{2\gamma}^{\theta_0} \left(F^{\Phi}_{i_m}-F\right) d\xi+\int_{S^{2n+1}}\bar{F} \mathcal{A}_{2\gamma}^{\theta_0}(F) d\xi\\
        =&\lim_{i_m\to \infty}\int_{S^{2n+1}}\bar{F}^{\Phi}_{i_m} \mathcal{A}_{2\gamma}^{\theta_0} (F^{\Phi}_{i_m}) d\xi={C}^{-1}_{n,2\gamma},
    \end{split}
\end{equation}
we conclude that 
\begin{equation}\label{norm}
    \lim_{i_m\to \infty}\int_{S^{2n+1}}\left(\bar{F}^{\Phi}_{i_m}-\bar{F}\right) \mathcal{A}_{2\gamma}^{\theta_0} \left(F^{\Phi}_{i_m}-F\right) d\xi=0.
\end{equation}
By the definition of the Folland--Stein space, (\ref{norm}) is equivalent to $\|F^{\Phi}_{i_m}-F\|_{S^{\gamma,2}}=0$, as $i_m\to \infty$, which means the strong convergence of $\{F^{\Phi}_{i_m}\}$ in $S^{\gamma,2}(S^{2n+1})$. 
\end{proof}
\begin{corollary}\label{transl}
Let $(S^{2n+1}, T^{1,0}S^{2n+1}, \theta_0)$ be the CR unit sphere with the volume element $d\xi$. Suppose that $u$ is a positive local minimizer of \begin{displaymath}
Y_{\gamma}(S^{2n+1})=\inf \left\{A^{\theta_0}_{\gamma}(F);F\in S^{\gamma,2}(S^{2n+1}), B^{\theta_0}_{\gamma}(F)=1 \right\}, \quad 0<\gamma<n+1,
\end{displaymath}
where $A^{\theta_0}_{\gamma}(F)$ and $B^{\theta_0}_{\gamma}(F)$ are defined by 
\begin{align*}
    A^{\theta_0}_{\gamma}(F)& :=\int_{S^{2n+1}} \bar{F} \mathcal{A}_{2\gamma}^{\theta_0}(F) d\xi, \quad F\in S^{\gamma,2}(S^{2n+1}),\\
    B^{\theta_0}_{\gamma}(F)& :=\int_{S^{2n+1}} |F|^p d\xi, \quad p=\frac{2Q}{Q-2\gamma}.
\end{align*}
Then there is an element $\Phi$ of the CR automorphism group ${\mathcal{A}}(S^{2n+1})$ of $S^{2n+1}$ such that
\begin{equation}\label{replace}
    u^{\Phi}=|J_{\Phi}|^{\frac{1}{p}}\Phi^{*}u
\end{equation}
is a positive minimizer of $Y_{\gamma}(S^{2n+1})$ which satisfies (\ref{balanced}), where $|J_{\Phi}|$ is the determinant of the Jacobian of $\Phi$.
\end{corollary}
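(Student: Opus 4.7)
The plan is to use the CR-invariance of the functionals $A^{\theta_0}_\gamma$ and $B^{\theta_0}_\gamma$ together with a topological center-of-mass argument, which is the content of \cite[Lemma B.1]{FL10} already invoked in the existence proof above. I would do this in four steps.

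First, I would verify that the map $u\mapsto u^\Phi=|J_\Phi|^{1/p}\Phi^*u$ preserves both $A^{\theta_0}_\gamma$ and $B^{\theta_0}_\gamma$, so that $u^\Phi$ remains a positive local minimizer of $Y_\gamma(S^{2n+1})$ for every $\Phi\in\mathcal{A}(S^{2n+1})$. The invariance of $B^{\theta_0}_\gamma$ follows from a direct change of variables, using that $|J_\Phi|^Q$ is the Jacobian determinant of $\Phi$ with respect to $d\xi$. The invariance of $A^{\theta_0}_\gamma$ is a direct consequence of the intertwining identity (\ref{intertwining}) for $\mathcal{A}_{2\gamma}^{\theta_0}=\mathcal{A}_{w,w}^{\theta_0}$ with $2w+n+1=\gamma$, applied on one side and paired with $\bar{u}^\Phi$ on the other.

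Second, I would parametrize the relevant subgroup of CR automorphisms by the open unit ball $B^{n+1}\subset\C^{n+1}$: for each $\xi\in B^{n+1}$ there is a natural $\Phi_\xi\in\mathcal{A}(S^{2n+1})$ with $|J_{\Phi_\xi}(\zeta)|=\sqrt{1-|\xi|^2}/|1-\xi\cdot\bar\zeta|$, continuous in $\xi$. Then I would define the center-of-mass map $G:B^{n+1}\to\C^{n+1}$ by
\begin{equation*}
G(\xi)=\int_{S^{2n+1}}\eta\,\frac{|u^{\Phi_\xi}(\eta)|^p}{\int_{S^{2n+1}}|u^{\Phi_\xi}|^p\,d\xi}\,d\xi(\eta),
\end{equation*}
which is continuous on $B^{n+1}$. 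A zero of $G$ is exactly the balanced condition (\ref{balanced}) for $u^{\Phi_{\xi_0}}$, so finding such a zero is the goal.

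Third, I would analyze the boundary behavior: as $\xi\to\eta_0\in S^{2n+1}$, the density $|J_{\Phi_\xi}|^Q/\int|J_{\Phi_\xi}|^Q\,d\xi$ concentrates at the point $\eta_0$ (or its antipode, depending on the convention), and since $u\in L^p$, the renormalized measure $|u^{\Phi_\xi}|^p\,d\xi/\|u^{\Phi_\xi}\|_p^p$ behaves the same way. Consequently $G$ extends continuously to $\overline{B^{n+1}}$ with boundary values agreeing with the identity on $S^{2n+1}$ up to sign. Finally, I would conclude via a standard degree/Brouwer argument: since $G$ restricted to the boundary has non-zero degree as a self-map of $S^{2n+1}$, any continuous extension to $\overline{B^{n+1}}$ must hit the origin in the interior, producing the desired $\xi_0\in B^{n+1}$ and hence $\Phi=\Phi_{\xi_0}$.

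The main obstacle is the boundary analysis in step three: one needs a uniform concentration estimate of $|u^{\Phi_\xi}|^p\,d\xi$ to a Dirac mass as $\xi\to\partial B^{n+1}$, which requires a careful splitting between a shrinking neighborhood of the concentration point (where the Jacobian blows up) and its complement (where $\Phi_\xi$ is well-controlled), together with $L^p$-integrability of $u$ to absorb the tails. Once this uniform concentration is established, the degree argument is routine and the corollary follows.
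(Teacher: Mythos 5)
Your proposal takes essentially the same approach as the paper: the paper's proof of Corollary \ref{transl} is the two-sentence observation that CR covariance of $A^{\theta_0}_\gamma$ and $B^{\theta_0}_\gamma$ preserves the local-minimizer property under the action $u\mapsto u^\Phi$, together with a citation of \cite[Lemma B.1]{FL10} for the existence of a balancing automorphism. You correctly identify both ingredients, and the extra content in your steps two through four is just an unpacking of the center-of-mass/degree argument that constitutes the proof of that cited lemma.
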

\begin{proof}
    Since $A^{\theta_0}_{\gamma}$ and $B^{\theta_0}_{\gamma}$ are CR covariant, $u$ is a local minimizer of $Y_{\gamma}(S^{2n+1})$ if and only if $u^{\Phi}$ is a local minimizer of $Y_{\gamma}(S^{2n+1})$ for each $\Phi\in {\mathcal{A}}{\textbf{ut}}(S^{2n+1})$. Since $u$ is positive, $B^{\theta_0}_{\gamma}(u)\neq 0$. It follows from \cite[Lemma B.1]{FL10} that there is a $\Phi\in {\mathcal{A}}{\textbf{ut}}(S^{2n+1})$ such that $u^{\Phi}$ satisfies (\ref{balanced}).
\end{proof}

Next, we classify optimizers of sharp Sobolev inequalities (\ref{sharpSo}) following Frank--Lieb argument developed in \cite{Case19}. As explained in the introduction, the proof of Theorem \ref{class} consists of three ingredients. The desired spectral estimate are given in the following result.

\begin{proposition}\label{spectral}
Let $(S^{2n+1}, T^{1,0}S^{2n+1}, \theta_0)$ be the CR unit sphere with the volume element $d\xi$. Suppose that $u$ is a positive local minimizer of $Y_{\gamma}(S^{2n+1})$. Suppose additionally that 
\begin{equation}\label{cbalanced}
    \int_{S^{2n+1}}z_l u^p d\xi=0, \quad l=1,\cdots, n+1,
\end{equation}
where $z_1,\cdots, z_{n+1}$ are coordinates on $\C^{n+1}$ and we regard $S^{2n+1}\subset \C^{n+1}$ as the unit sphere. Then
\begin{equation}\label{spectral3}
    \sum_{l=1}^{n+1}\int_{S^{2n+1}}\bar{z}_l u[\mathcal{A}_{2\gamma}^{\theta_0},z_l](u)d\xi \geqslant (p-2) \int_{S^{2n+1}}u\mathcal{A}_{2\gamma}^{\theta_0}(u)d\xi,
\end{equation}
where 
\begin{displaymath}
    [\mathcal{A}_{2\gamma}^{\theta_0},z_l](u):=\mathcal{A}_{2\gamma}^{\theta_0}(z_l u)-z_l\mathcal{A}_{2\gamma}^{\theta_0}(u).
\end{displaymath}
\end{proposition}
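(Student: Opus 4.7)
The plan is to apply the standard Frank--Lieb second-variation argument. The key point is to convert the nonnegativity of the Hessian of the constrained functional at $u$ into the desired commutator-type inequality by using real test functions built from the coordinate functions $z_l$.

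First I would record the Euler--Lagrange equation. Since $u>0$ is a local minimizer, the Lagrange multiplier rule produces a constant $\Lambda>0$ with $\mathcal{A}_{2\gamma}^{\theta_0}(u)=\Lambda u^{p-1}$; testing against $u$ gives $\int_{S^{2n+1}}u\,\mathcal{A}_{2\gamma}^{\theta_0}(u)\,d\xi=\Lambda\int_{S^{2n+1}}u^p\,d\xi$. Next I would compute the second variation. For a real-valued perturbation $\eta$, since $u>0$ one has $|u+t\eta|^p=(u+t\eta)^p$ for $|t|$ small, and direct differentiation gives
\begin{equation*}
\tfrac{d^2}{dt^2}\Big|_{t=0}B^{\theta_0}_\gamma(u+t\eta)=p(p-1)\int_{S^{2n+1}}u^{p-2}\eta^2\,d\xi,\qquad
\tfrac{d^2}{dt^2}\Big|_{t=0}A^{\theta_0}_\gamma(u+t\eta)=2\int_{S^{2n+1}}\eta\,\mathcal{A}_{2\gamma}^{\theta_0}(\eta)\,d\xi.
\end{equation*}
Combined with the Lagrange multiplier rule, local minimality of $u$ then forces
\begin{equation*}
\int_{S^{2n+1}}\eta\,\mathcal{A}_{2\gamma}^{\theta_0}(\eta)\,d\xi\geq(p-1)\Lambda\int_{S^{2n+1}}u^{p-2}\eta^2\,d\xi
\end{equation*}
for every real-valued $\eta$ satisfying the linearized constraint $\int_{S^{2n+1}}u^{p-1}\eta\,d\xi=0$.

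Then I would insert the test functions suggested by the balanced hypothesis. Writing $z_l=x_{2l-1}+ix_{2l}$, condition (\ref{cbalanced}) yields
\begin{equation*}
\int_{S^{2n+1}}u^{p-1}(x_{2l-1}u)\,d\xi=\RE\int_{S^{2n+1}}z_l u^p\,d\xi=0,
\end{equation*}
and similarly $\int u^{p-1}(x_{2l}u)\,d\xi=0$, so both real functions $\eta=x_{2l-1}u$ and $\eta=x_{2l}u$ are admissible. Adding the two resulting spectral inequalities, and using self-adjointness of $\mathcal{A}_{2\gamma}^{\theta_0}$ on real functions to cancel the cross terms in $\overline{z_l u}\,\mathcal{A}_{2\gamma}^{\theta_0}(z_l u)$, yields
\begin{equation*}
\int_{S^{2n+1}}\overline{z_l u}\,\mathcal{A}_{2\gamma}^{\theta_0}(z_l u)\,d\xi\geq(p-1)\Lambda\int_{S^{2n+1}}|z_l|^2u^p\,d\xi.
\end{equation*}
Summing over $l=1,\ldots,n+1$, using $\sum_l|z_l|^2=1$ on $S^{2n+1}$, together with the algebraic identity
\begin{equation*}
\sum_{l=1}^{n+1}\int_{S^{2n+1}}\overline{z_l u}\,\mathcal{A}_{2\gamma}^{\theta_0}(z_l u)\,d\xi=\sum_{l=1}^{n+1}\int_{S^{2n+1}}\bar z_l u\,[\mathcal{A}_{2\gamma}^{\theta_0},z_l](u)\,d\xi+\int_{S^{2n+1}}u\,\mathcal{A}_{2\gamma}^{\theta_0}(u)\,d\xi,
\end{equation*}
gives (\ref{spectral3}) after moving the last term to the other side and invoking $\Lambda\int u^p\,d\xi=\int u\mathcal{A}_{2\gamma}^{\theta_0}(u)\,d\xi$.

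The main obstacle is obtaining the sharp constant $p-2$ rather than a weaker one. A naive complex perturbation in the direction $z_l u$ mixes a ``real'' tangent direction, in which the Hessian of $B^{\theta_0}_\gamma$ has coefficient $p(p-1)u^{p-2}$, with an ``imaginary'' one, where it is only $pu^{p-2}$, and produces a strictly weaker estimate. The balanced hypothesis (\ref{cbalanced}) is precisely what permits one to split $z_l u$ into its two real components $x_{2l-1}u$ and $x_{2l}u$, both of which are independently admissible \emph{real} test functions and therefore both inherit the larger $(p-1)\Lambda$ spectral gap.
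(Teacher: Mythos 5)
Your proposal is correct and follows essentially the same route as the paper's proof: derive the spectral gap inequality from the second variation (the paper works with the normalized curve $u_t=\|u+tv\|_p^{-1}(u+tv)$ on the constraint surface, while you invoke the second-order Lagrange multiplier condition directly, which is equivalent), then test with $x_{2l-1}u$ and $x_{2l}u$ -- both admissible by the balanced condition -- cancel cross terms by self-adjointness, sum over $l$ with $\sum_l|z_l|^2=1$, and unpack the commutator. Your closing remark about why the real-component decomposition is needed to recover the sharp constant $p-2$ accurately identifies the point of the balanced hypothesis.
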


We first prove the commutator identity involving the intertwining operators needed to execute the Frank--Lieb argument.

\begin{proof}[Proof of Theorem \ref{commutator}]
For $Y_{j,k}\in \mathcal{H}^{\S}_{j,k}$, we denote $\mathcal{Y}_{j,k}\in \mathcal{H}_{j,k}$ the homogeneous extension of $Y_{j,k}$ in $\C^{n+1}$. 

It follows from the algorithm in \cite{AR95} that for each coordinate function $z_l$ on $\C^{n+1}$, we have the decomposition
\begin{equation}
    z_l\mathcal{Y}_{j,k}=z_l\mathcal{Y}_{j,k}-\frac{|z|^2}{n+j+k}\frac{\partial}{\partial \bar{z}_l}\mathcal{Y}_{j,k}+\frac{|z|^2}{n+j+k}\frac{\partial}{\partial \bar{z}_l}\mathcal{Y}_{j,k},
\end{equation}
where
\begin{equation*}
    z_l\mathcal{Y}_{j,k}-\frac{|z|^2}{n+j+k}\frac{\partial}{\partial \bar{z}_l}\mathcal{Y}_{j,k}\in \mathcal{H}_{j+1,k}, \quad \quad \frac{\partial}{\partial \bar{z}_l}\mathcal{Y}_{j,k}\in \mathcal{H}_{j,k-1}.
\end{equation*}
Coming this with (\ref{form}) yields
\begin{equation}\label{cal1}
\begin{split}
    \mathcal{A}_{w,w'}^{\theta_0}\left(z_lY_{j,k}\right)&=\lambda_{j+1}(w)\lambda_{k}(w')\left(z_l\mathcal{Y}_{j,k}-\frac{|z|^2}{n+j+k}\frac{\partial}{\partial \bar{z}_l}\mathcal{Y}_{j,k}\right)\bigg|_{S^{2n+1}}\\
    &+\frac{\lambda_{j}(w)\lambda_{k-1}(w')}{n+j+k}\left(|z|^2\frac{\partial}{\partial \bar{z}_l}\mathcal{Y}_{j,k}\right)\bigg|_{S^{2n+1}}.
\end{split}
\end{equation}
Applying (\ref{cal1}) to each $z_l$, $l=1,\cdots, n+1$, multiplying $\bar{z}_l$ and taking the sum, we obtain
\begin{equation}\label{cal2}
\begin{split}
     &\left(\sum_{l=1}^{n+1}\bar{z}_l \left[\mathcal{A}_{w,w'}^{\theta_0},z_l\right]\right)Y_{j,k}=\left(\lambda_{j+1}(w)\lambda_{k}(w')-\lambda_{j}(w)\lambda_{k}(w')\right)Y_{j,k}\\
     &+\frac{\lambda_{j}(w)\lambda_{k-1}(w')-\lambda_{j+1}(w)\lambda_{k}(w')}{n+j+k} \left(\sum_{l=1}^{n+1}\bar{z}_l\frac{\partial}{\partial \bar{z}_l}\mathcal{Y}_{j,k}\right)\bigg|_{S^{2n+1}}.
     \end{split}
\end{equation}
By the explicit value of spectrum (\ref{spectrum}), we have
\begin{equation}\label{relation1}
\begin{split}
    \lambda_{j+1}(w)\lambda_{k}(w')&=\frac{j+\gamma-w}{j-w}\lambda_{j}(w)\lambda_{k}(w'),\\
    \lambda_{j}(w)\lambda_{k-1}(w')&=\frac{k-1-w'}{k-1+\gamma-w'}\lambda_{j}(w)\lambda_{k}(w').
    \end{split}
\end{equation}
Moreover, notice that 
\begin{equation}\label{relation2}
    \left(\sum_{l=1}^{n+1}\bar{z}_l\frac{\partial}{\partial \bar{z}_l}\mathcal{Y}_{j,k}\right)\bigg|_{S^{2n+1}}=\overline{Z}\mathcal{Y}_{j,k}\bigg|_{S^{2n+1}}=kY_{j,k}.
\end{equation}
Plugging (\ref{relation1}) and (\ref{relation2}) into (\ref{cal2}) yields
\begin{equation}\label{same}
    \begin{split}
     \left(\sum_{l=1}^{n+1}\bar{z}_l \left[\mathcal{A}_{w,w'}^{\theta_0},z_l\right]\right)Y_{j,k}&=\frac{\gamma (\gamma-1-w')}{\left(j-w\right)\left(k-1+\gamma-w'\right)}\lambda_{j}(w)\lambda_{k}(w')Y_{j,k}\\
     &=\gamma (\gamma-1-w')\cdot \frac{\lambda_j(w)}{j-w}\cdot\frac{\lambda_{k}(w')}{k-1+\gamma-w'} Y_{j,k}\\
     &=\gamma (\gamma-1-w') \lambda_j(w-1)\lambda_k(w') Y_{j,k}\\
     &=\gamma (\gamma-1-w') \mathcal{A}_{w-1,w'}^{\theta_0}Y_{j,k}.
     \end{split}
\end{equation}
By Proposition \ref{generalclass}, the desired result follows from (\ref{same}).
\end{proof}

We now turn to the desired spectral estimate. The fact that $\mathcal{A}_{2\gamma}^{\theta_0}$ is formally self-adjoint implies that if $u_t$ is a one-parameter family of functions in $S^{\gamma,2}(S^{2n+1}; \R)$ with $u_0=u$, then
\begin{align}
   \label{variation1}\frac{d}{dt}\bigg|_{t=0}A^{\theta_0}_{\gamma}(u_t)&=2\int_{S^{2n+1}}\dot{u}\mathcal{A}_{2\gamma}^{\theta_0} (u) d\xi, \\ \label{variation2}
    \frac{d^2}{dt^2}\bigg|_{t=0}A^{\theta_0}_{\gamma}(u_t)&=2\int_{S^{2n+1}}\dot{u}\mathcal{A}_{2\gamma}^{\theta_0} (\dot{u}) d\xi+2\int_{S^{2n+1}}\ddot{u}\mathcal{A}_{2\gamma}^{\theta_0} (u) d\xi. 
\end{align}
for $\dot{u}:=\frac{\partial}{\partial t}\big|_{t=0}u_t$ and $\ddot{u}:=\frac{\partial^2}{\partial t^2}\big|_{t=0}u_t$.
\begin{proof}[Proof of Proposition \ref{spectral}]
Let $v\in C^{\infty}(S^{2n+1};\R)$ be such that
\begin{equation}\label{ccbalanced}
    \int_{S^{2n+1}}v u^{p-1}d\xi=0.
\end{equation}
Then $u_t=\|u+tv\|^{-1}_{p}(u+tv)$ defines a smooth curve with $B^{\theta_0}_\gamma(u_t)=1$ and $u_0=u$, $\dot{u}_t\big|_{t=0}=v$. Since $u$ is a critical point of $A^{\theta_0}_\gamma$, it follows from (\ref{variation1}) that
\begin{displaymath}
    P^{\theta_0}_\gamma(u)=A^{\theta_0}_\gamma(u)u^{p-1}.
\end{displaymath}
Since $u$ is a local minimizer, $\frac{d^2}{dt^2}\bigg|_{t=0}A^{\theta_0}_\gamma(u_t)\geqslant 0$. Expanding this using (\ref{variation2}) and the above display yields
\begin{equation}\label{spectral2}
    \int_{S^{2n+1}}v \mathcal{A}_{2\gamma}^{\theta_0}(v)d\xi \geqslant (p-1)A^{\theta_0}_\gamma(u)\int_{S^{2n+1}}u^{p-2}v^2d\xi.
\end{equation}
The assumption (\ref{cbalanced}) implies that for each coordinate function $z_l=x_l+i y_l$, $1\leqslant l\leqslant n+1$, the functions $x_l u$ and $y_l u$ satisfy (\ref{ccbalanced}). Since $\mathcal{A}_{2\gamma}^{\theta_0}$ is self-adjoint, it follows from (\ref{spectral2}) that
\begin{displaymath}
\begin{split}
    \sum_{l=1}^{n+1}\int_{S^{2n+1}}\bar{z}_l u\mathcal{A}_{2\gamma}^{\theta_0}(z_lu)d\xi&=\sum_{l=1}^{n+1}\int_{S^{2n+1}}\left(x_l u\mathcal{A}_{2\gamma}^{\theta_0}(x_lu)+y_l u\mathcal{A}_{2\gamma}^{\theta_0}(y_lu)\right)d\xi\\
    &\geqslant (p-1)A^{\theta_0}_\gamma(u)\int_{S^{2n+1}}\sum_{l=1}^{n+1}(x_l^2+y_l^2)u^{p}d\xi\\
    &=(p-1)A^{\theta_0}_\gamma(u).
    \end{split}
\end{displaymath}
The final conclusion follows from the definition of $[\mathcal{A}_{2\gamma}^{\theta_0},z_j]$.
\end{proof}

Proposition \ref{spectral} and Corollary \ref{transl} reduce the problem of classifying positive local minimizers of $A^{\theta_0}_\gamma$ to the problem of showing that the only functions which satisfy (\ref{spectral3}) are the constants. This can be done by using the commutator identity in Theorem \ref{commutator}.

\begin{proof}[Proof of Theorem \ref{class}]
All computations in this proof are carried out with respect to the CR unit sphere $(S^{2n+1}, T^{1,0}S^{2n+1}, \theta_0)$ with the volume element $d\xi$. As discussed in \cite[Theorem 1.3]{FL10}, we may assume that the optimizer $u$ of (\ref{sharpSo}) is a positive real function; i.e. $u$ is a positive real minimizer of 
\begin{displaymath}
A^{\theta_0}_\gamma(u)=\int_{S^{2n+1}}uP^{\theta_0}_\gamma(u)d\xi
\end{displaymath}
with $B^{\theta_0}_\gamma(u)=1$. By Corollary \ref{transl} we may assume that $u$ satisfies (\ref{cbalanced}). We conclude from Proposition \ref{spectral} that
\begin{equation}\label{spe}
\sum_{l=l}^{n+1}\int_{S^{2n+1}}\bar{z}_l u[\mathcal{A}_{2\gamma}^{\theta_0},z_l](u)d\xi \geqslant (p-2) \int_{S^{2n+1}}u\mathcal{A}_{2\gamma}^{\theta_0}(u)d\xi.
\end{equation}
Combining this with Theorem \ref{commutator} yields 
\begin{equation}\label{linear}
    0\geqslant \int_{S^{2n+1}} u\left((p-2)\mathcal{A}_{2\gamma}^{\theta_0}-\gamma (\gamma-1-w) \mathcal{A}_{w-1,w}^{\theta_0}\right)(u)d\xi, \quad w=\frac{\gamma-n-1}{2}.
\end{equation}
Direct computation shows that for $Y_{j,k}\in \mathcal{H}^{\S}_{j,k}$,
\begin{equation*}
\begin{split}
    &\int_{S^{2n+1}} \overline{Y}_{j,k}\left((p-2)\mathcal{A}_{2\gamma}^{\theta_0}-\gamma (\gamma-1-w)\mathcal{A}_{w-1,w}^{\theta_0}\right)\left(Y_{j,k}\right) d\xi\\
    &=\left(\frac{2\gamma}{n+1-\gamma}-\frac{\gamma \frac{n+\gamma-1}{2}}{(j+\frac{n+1-\gamma}{2})(k+\frac{n+\gamma-1}{2})}\right)\lambda_j(w)\lambda_{k}(w)\\
    &\geqslant \left(\frac{2\gamma}{n+1-\gamma}-\frac{\gamma \frac{n+\gamma-1}{2}}{(\frac{n+1-\gamma}{2})(\frac{n+\gamma-1}{2})}\right)\lambda_j(w)\lambda_{k}(w)=0,
    \end{split}
\end{equation*}
where equality holds if and only if $(j,k)=(0,0)$. We conclude that the operator in (\ref{linear}) is nonnegative with kernel exactly equal to the constant functions. Combing this with (\ref{spe}) yields $u$ is constant. The final conclusion follows from the fact that if $\Phi\in {\mathcal{A}}{\textbf{ut}}(S^{2n+1})$, then 
\begin{displaymath}
|J_{\Phi}|^Q(\eta)=\frac{C}{|1-\xi \cdot \bar{\eta}|^Q}, \quad \eta\in S^{2n+1}
\end{displaymath}
for some $C>0, \xi\in \C^{n+1}, |\xi|<1$.
\end{proof}

\section{Classical sharp Sobolev inequalities}
The main result in this section is the proof of Theorem \ref{classicalcom} and the classification of optimizers of the classical sharp Sobolev inequalities (\ref{classicalHLS}).

\begin{proof}[Proof of Theorem \ref{classicalcom}]
For $Y_{h}\in \mathcal{H}^{\S}_{h}$, we denote $\mathcal{Y}_{h}\in \mathcal{H}_{h}$ the homogeneous extension of $Y_{h}$ in $\R^{n+1}$.

It follows from the algorithm in \cite{AR95} that for each coordinate function $x_j$ on $\R^{n+1}$, we have the decomposition
\begin{equation}
    x_j\mathcal{Y}_{h}=x_j\mathcal{Y}_{h}+\frac{|x|^2}{1-n-2h}\frac{\partial}{\partial x_j}\mathcal{Y}_{h}-\frac{|x|^2}{1-n-2h}\frac{\partial}{\partial x_j}\mathcal{Y}_{h},
\end{equation}
where
\begin{equation*}
    x_j\mathcal{Y}_{h}+\frac{|x|^2}{1-n-2h}\frac{\partial}{\partial x_j}\mathcal{Y}_{h}\in \mathcal{H}_{h+1}, \quad \quad \frac{\partial}{\partial x_j}\mathcal{Y}_{h}\in \mathcal{H}_{h-1}.
\end{equation*}
Combining this with the fact that for $Y_{h}\in \mathcal{H}^{\S}_{h}$,
\begin{equation}\label{spectrum2}
    P_{2\gamma}^{g_c}Y_h=\mu_h(2\gamma)Y_h,\quad \quad \mu_h(2\gamma)=\frac{\Gamma(h+\frac{n}{2}+\gamma)}{\Gamma(h+\frac{n}{2}-\gamma)},
\end{equation}
we obtain
\begin{equation}\label{cal4}
\begin{split}
     P_{2\gamma}^{g_c}\left(x_jY_h\right)=&\mu_{h+1}(2\gamma)\left(x_j\mathcal{Y}_{h}+\frac{|x|^2}{1-n-2h}\frac{\partial}{\partial x_j}\mathcal{Y}_{h}\right)\bigg|_{S^n}\\
     &-\mu_{h-1}(2\gamma)\left(\frac{|x|^2}{1-n-2h}\frac{\partial}{\partial x_j}\mathcal{Y}_{h}\right)\bigg|_{S^n}.
     \end{split}
\end{equation}
Applying (\ref{cal4}) to each $x_j$, $j=1,\cdots, n+1$, multiplying $x_j$ and taking the sum, we have
\begin{equation}\label{cal3}
\begin{split}
\left(\sum_{j=1}^{n+1}x_j\left[P_{2\gamma}^{g_c},x_j\right]\right)Y_h&=\left(\mu_{h+1}(2\gamma)-\mu_{h}(2\gamma)\right)Y_h\\
&+\frac{\mu_{h+1}(2\gamma)-\mu_{h-1}(2\gamma)}{1-n-2h}\left(|x|^2\sum_{j=1}^{n+1}\frac{\partial}{\partial x_j}\mathcal{Y}_{h}\right)\bigg|_{S^n}.
\end{split}
\end{equation}
By the explicit value of $\mu_h(2\gamma)$ (\ref{spectrum2}), we have
\begin{equation}\label{cal5}
    \begin{split}
        \mu_{h-1}(2\gamma)&=\left(h+\frac{n}{2}-\gamma\right)\left(h+\frac{n}{2}-\gamma-1\right) \mu_{h}(2(\gamma-1)),\\
        \mu_{h}(2\gamma)&=\left(h+\frac{n}{2}-\gamma\right)\left(h+\frac{n}{2}+\gamma-1\right) \mu_{h}(2(\gamma-1)),\\
        \mu_{h+1}(2\gamma)&=\left(h+\frac{n}{2}+\gamma\right)\left(h+\frac{n}{2}+\gamma-1\right) \mu_{h}(2(\gamma-1)).
    \end{split}
\end{equation}
Moreover, notice that 
\begin{equation}\label{relation3}
    \left(\sum_{j=1}^{n+1}x_j\frac{\partial}{\partial x_j}\mathcal{Y}_{h}\right)\bigg|_{S^{n}}=hY_{h}.
\end{equation}
Plugging (\ref{cal4}) and (\ref{cal5}) into (\ref{cal3}) yields
\begin{equation}\label{spectrum2}
\left(\sum_{j=1}^{n+1}x_j\left[P_{2\gamma}^{g_c},x_j\right]\right)Y_h=\gamma(n+2\gamma-2)P_{2(\gamma-1)}^{g_c}Y_h.
\end{equation}
Let
\begin{displaymath}
R_{2\gamma}^{g_c}:=\sum_{j=1}^{n+1}x_j\left[P_{2\gamma}^{g_c},x_j\right]-\gamma(n+2\gamma-2)P_{2(\gamma-1)}^{g_c}.
\end{displaymath}
(\ref{spectrum2}) implies that $R_{2\gamma}^{g_c}=0$ on $\mathrm{Span}\left\{\bigoplus_{h\in \N}\mathcal{H}^{\S}_h\right\}$. Under the standard inner product $(\cdot,\cdot)$ in $L^2(S^n)$, we have that for any $H\in \mathrm{Span}\left\{\bigoplus_{h\in \N}\mathcal{H}^{\S}_h\right\}$, $F\in C^{\infty}(S^n)$, 
\begin{equation*}
    \left(R_{2\gamma}^{g_c}(F),H\right)=\left(F,R_{2\gamma}^{g_c}(H)\right)=0,
\end{equation*}
where we use the self-adjointness of $R_{2\gamma}^{g_c}$. By the continuity of the inner product and the fact that $L^2(S^n)=\overline{\mathrm{Span}\left\{\bigoplus_{h\in \N}\mathcal{H}^{\S}_h\right\}}$, we obtain that
\begin{equation*}
    \left(R_{2\gamma}^{g_c}(F),G\right)=0, \quad \forall F\in C^{\infty}(S^n), G\in L^2(S^n),
\end{equation*}
which implies that $R_{2\gamma}^{g_c}=0$ on $C^{\infty}(S^n)$.
\end{proof}

The following proposition is a direct consequence of Lemma 4.2 in \cite{Case19}.
\begin{proposition}\label{trans2}
Let $(S^n, g_c)$ be the standard sphere with the canonical metric $g_c$ and $P_{2\gamma}^{g_c}$ be the intertwining operator of order $2\gamma$. Suppose that $u$ is a positive local minimizer of \begin{displaymath}
Y_{\gamma}(S^{n})=\inf \left\{A^{g_c}_{\gamma}(F);F\in W^{\gamma,2}(S^{n}), B^{g_c}_{\gamma}(F)=1 \right\}, \quad 0<\gamma<\frac{n}{2},
\end{displaymath}
where $A^{g_c}_{\gamma}(F)$ and $B^{g_c}_{\gamma}(F)$ are defined by 
\begin{align*}
    A^{g_c}_{\gamma}(F)& :=\int_{S^{n}} F P_{2\gamma}^{g_c}(F) d\sigma, \quad F\in W^{\gamma,2}(S^{n}),\\
    B^{g_c}_{\gamma}(F)& :=\int_{S^{n}} |F|^p d\sigma, \quad p=\frac{2n}{n-2\gamma}.
\end{align*}
Then there is an element $\Phi$ of the conformal group ${\mathrm{Conf}}(S^{n})$ of $S^{n}$ such that
\begin{equation}\label{replace}
    u^{\Phi}=|J_{\Phi}|^{\frac{1}{p}}\Phi^{*}u
\end{equation}
is a positive minimizer of $Y_{\gamma}(S^{n})$ which satisfies \begin{equation}\label{ebalanced}
    \int_{S^{n}}x_l|F|^p d\sigma=0, \quad l=1,\cdots, n+1,
\end{equation} 
where $|J_{\Phi}|$ is the determinant of the Jacobian of $\Phi$.
\end{proposition}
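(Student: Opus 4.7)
The plan is to invoke \cite[Lemma 4.2]{Case19} directly, following the same pattern used to prove Corollary \ref{transl} in the CR setting. The first step is to verify that both $A^{g_c}_\gamma$ and $B^{g_c}_\gamma$ are jointly conformally invariant in the sense that $A^{g_c}_\gamma(u^\Phi) = A^{g_c}_\gamma(u)$ and $B^{g_c}_\gamma(u^\Phi) = B^{g_c}_\gamma(u)$ for every $\Phi \in \mathrm{Conf}(S^n)$. The $B$-invariance is immediate from $|u^\Phi|^p d\sigma = \Phi^*(|u|^p d\sigma)$; the $A$-invariance follows from the defining intertwining property
\[
P_{2\gamma}^{g_c}\bigl(|J_\Phi|^{(n-2\gamma)/(2n)}\Phi^* u\bigr) = |J_\Phi|^{(n+2\gamma)/(2n)} \Phi^*\bigl(P_{2\gamma}^{g_c} u\bigr)
\]
together with a change of variables. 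Since local minimality is preserved under this conformal action, $u^\Phi$ is a positive local minimizer of $Y_\gamma(S^n)$ whenever $u$ is. The task therefore reduces to producing a $\Phi$ for which the probability measure $d\mu_\Phi := |u^\Phi|^p d\sigma$ satisfies $\int_{S^n} x_l \, d\mu_\Phi = 0$ for $l = 1,\dots,n+1$.

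Next, I would parametrize $\mathrm{Conf}(S^n)/O(n+1)$ by the open unit ball $B^{n+1} \subset \R^{n+1}$, assigning to each $\xi \in B^{n+1}$ the conformal map $\Phi_\xi$ characterized by $|J_{\Phi_\xi}|(\eta) = C_\xi / |1 - \langle \xi,\eta \rangle|^n$. Define the center-of-mass map
\[
G : B^{n+1} \longrightarrow B^{n+1}, \quad G(\xi) := \int_{S^n} \eta \, d\mu_{\Phi_\xi}(\eta),
\]
which takes values in $B^{n+1}$ because $d\mu_{\Phi_\xi}$ is a probability measure on $S^n$. The key analytic input is that as $\xi \to \eta_0 \in \partial B^{n+1}$ the Jacobian $|J_{\Phi_\xi}|$ concentrates at $\eta_0$, forcing $d\mu_{\Phi_\xi} \rightharpoonup \delta_{\eta_0}$ weakly, so $G$ extends continuously to $\overline{B^{n+1}}$ with boundary map the identity. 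A standard Brouwer degree argument, using that no continuous self-map of $\overline{B^{n+1}}$ restricting to the identity on $\partial B^{n+1}$ can avoid the origin, produces $\xi_0 \in B^{n+1}$ with $G(\xi_0) = 0$. Then $\Phi := \Phi_{\xi_0}$ is the desired conformal transformation.

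The main obstacle is establishing the boundary concentration quantitatively enough to guarantee continuity of $G$ up to $\partial B^{n+1}$. One must show that for every spherical cap around $\eta_0$, the mass of $d\mu_{\Phi_\xi}$ in its complement tends to $0$ as $\xi \to \eta_0$, uniformly in the direction of approach. Since $|u|^p \in L^1(S^n)$ with $\int_{S^n} |u|^p d\sigma = 1$, this concentration follows from the explicit form of $|J_{\Phi_\xi}|$ after a change of variables and dominated convergence. This is precisely the content of \cite[Lemma 4.2]{Case19}, modeled on \cite[Lemma B.1]{FL10}, whose hypotheses are satisfied here since $u$ is positive and $B^{g_c}_\gamma(u) = 1$.
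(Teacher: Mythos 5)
Your proposal is correct and takes essentially the same approach as the paper: the paper dispatches this proposition in one line by declaring it a direct consequence of \cite[Lemma 4.2]{Case19}, and your argument invokes the same lemma, additionally unpacking the standard internals (conformal covariance of $A^{g_c}_\gamma$ and $B^{g_c}_\gamma$, followed by the Hersch-type center-of-mass map and Brouwer degree argument on the unit ball) that the citation encapsulates.
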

By a similar argument in Proposition \ref{spectral}, we obtain the desired spectral estimate as follows.
\begin{proposition}
Let $(S^n, g_c)$ be the standard sphere with the canonical metric $g_c$ and $P_{2\gamma}^{g_c}$ be the intertwining operator of order $2\gamma$. Suppose that $u$ is a positive local minimizer of $Y_{\gamma}(S^{n})$. Suppose additionally that 
\begin{equation}
    \int_{S^{n}}x_l u^p d\xi=0, \quad l=1,\cdots, n+1,
\end{equation}
where $x_1,\cdots, x_{n+1}$ are coordinates on $\R^{n+1}$. Then
\begin{equation}\label{spectral4}
    \sum_{l=1}^{n+1}\int_{S^{n}}x_l u[P_{2\gamma}^{g_c},x_l](u)d\sigma \geqslant (p-2) \int_{S^{n}}uP^{g_c}_{2\gamma}(u)d\sigma,
\end{equation}
where 
\begin{displaymath}
    [P_{2\gamma}^{g_c},x_l](u):=P_{2\gamma}^{g_c}(x_l u)-x_lP_{2\gamma}^{g_c}(u).
\end{displaymath}
\end{proposition}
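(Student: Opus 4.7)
The plan is to mirror, in the Riemannian setting, the second-variation argument used in the proof of Proposition \ref{spectral}. The standing assumption is that $u$ is a positive local minimizer of $Y_\gamma(S^n)$ with $B^{g_c}_\gamma(u)=1$ and balanced in the sense of \eqref{ebalanced}. Because $P^{g_c}_{2\gamma}$ is formally self-adjoint, the analogues of \eqref{variation1} and \eqref{variation2} hold with $\mathcal{A}_{2\gamma}^{\theta_0}$ replaced by $P^{g_c}_{2\gamma}$ and $d\xi$ replaced by $d\sigma$.

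\textbf{Step 1: abstract spectral estimate.} For any real $v\in W^{\gamma,2}(S^n)$ satisfying the orthogonality condition
\begin{equation*}
\int_{S^n} v\, u^{p-1}\, d\sigma=0,
\end{equation*}
the curve $u_t=\|u+tv\|_p^{-1}(u+tv)$ is a smooth path through $u$ in the level set $\{B^{g_c}_\gamma=1\}$ with $\dot u_0=v$. Since $u$ is a critical point, the first variation formula gives the Euler--Lagrange equation $P^{g_c}_{2\gamma}(u)=A^{g_c}_\gamma(u)\, u^{p-1}$. Evaluating the second variation at $t=0$ and using $u$ is a local minimizer yields
\begin{equation*}
\int_{S^n} v\, P^{g_c}_{2\gamma}(v)\, d\sigma \;\geqslant\; (p-1)\, A^{g_c}_\gamma(u) \int_{S^n} u^{p-2} v^2\, d\sigma,
\end{equation*}
exactly as in the derivation of \eqref{spectral2}.

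\textbf{Step 2: choose $v=x_l u$.} Each $v_l:=x_l u$ is admissible since multiplication by a smooth function preserves $W^{\gamma,2}(S^n)$, and the balanced hypothesis $\int_{S^n} x_l u^p\, d\sigma=0$ is precisely the required orthogonality condition $\int v_l u^{p-1}\, d\sigma=0$. Applying the spectral estimate of Step 1 to each $v_l$ and summing over $l=1,\dots,n+1$ gives
\begin{equation*}
\sum_{l=1}^{n+1}\int_{S^n} x_l u\, P^{g_c}_{2\gamma}(x_l u)\, d\sigma \;\geqslant\; (p-1)\, A^{g_c}_\gamma(u) \int_{S^n} \Bigl(\sum_{l=1}^{n+1} x_l^2\Bigr) u^{p}\, d\sigma.
\end{equation*}

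\textbf{Step 3: rearrange.} Using the identity $\sum_{l=1}^{n+1} x_l^2 = 1$ on $S^n$ and the normalization $B^{g_c}_\gamma(u)=1$, the right-hand side simplifies to $(p-1)A^{g_c}_\gamma(u)$. For the left-hand side, write $P^{g_c}_{2\gamma}(x_l u) = x_l P^{g_c}_{2\gamma}(u) + [P^{g_c}_{2\gamma},x_l](u)$, so that
\begin{equation*}
\sum_{l=1}^{n+1}\int_{S^n} x_l u\, P^{g_c}_{2\gamma}(x_l u)\, d\sigma \;=\; \int_{S^n} u\, P^{g_c}_{2\gamma}(u)\, d\sigma \;+\; \sum_{l=1}^{n+1}\int_{S^n} x_l u\, [P^{g_c}_{2\gamma},x_l](u)\, d\sigma,
\end{equation*}
again using $\sum x_l^2=1$. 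Transposing the first term to the right yields \eqref{spectral4}.

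There is no real obstacle here beyond checking admissibility in the fractional case; the only subtle point is that $x_l u\in W^{\gamma,2}(S^n)$ (which follows from the fact that $P^{g_c}_{2\gamma}$ is a smooth-coefficient pseudodifferential operator of order $2\gamma$, so multiplication by the smooth function $x_l|_{S^n}$ preserves the domain of $(P^{g_c}_{2\gamma})^{1/2}$). Everything else is a direct transcription of the CR argument, with the simplification that in the Riemannian setting only real-valued test functions appear, so the $x_l,y_l$ split used in Proposition \ref{spectral} collapses into a single family indexed by $l$.
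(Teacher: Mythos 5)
Your proof is correct and takes essentially the same approach as the paper. The paper itself does not write out a proof but merely states that the result follows ``by a similar argument in Proposition \ref{spectral}''; your three steps (second-variation spectral estimate, choosing $v_l=x_l u$, and rearranging with $\sum x_l^2=1$ and the commutator identity) are precisely that argument transcribed to the real Riemannian setting, with the correct observation that the $x_l,y_l$ split in the CR case collapses to a single real family here.
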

Now, we can give a new proof of classical sharp Sobolev inequalities (\ref{classicalHLS}).

\begin{proof}[A new proof of Theorem \ref{classicalHLS}]
All computations in this proof are carried out with respect to the standard unit sphere $(S^{n}, g_c)$ with the volume element $d\sigma$. As discussed in \cite[Theorem 3.1]{FL12a}, we may assume that the optimizer $u$ of (\ref{classicalHLS}) is a positive real function; i.e. $u$ is a positive real minimizer of 
\begin{displaymath}
A^{g_c}_\gamma(u)=\int_{S^{n}}uP^{g_c}_2{\gamma}(u)d\sigma
\end{displaymath}
with $B^{g_c}_\gamma(u)=1$. By Corollary \ref{trans2} we may assume that $u$ satisfies (\ref{ebalanced}). We conclude from Proposition \ref{spectral2} that
\begin{equation}\label{spe2}
\sum_{l=1}^{n+1}\int_{S^{n}}x_l u[P_{2\gamma}^{g_c},x_l](u)d\sigma \geqslant (p-2) \int_{S^{n}}uP^{g_c}_{2\gamma}(u)d\sigma.
\end{equation}
Combining this with Theorem \ref{classicalcom} yields
\begin{equation}
    0\geqslant \int_{S^n}u\left((p-2)P^{g_c}_{2\gamma}-\gamma\left(n+2\gamma-2\right)P^{g_c}_{2(\gamma-1)}\right)(u)d\sigma.
\end{equation}
It follows from the explicit form (\ref{explicitform}) of $P^{g_c}_\gamma$ that 
\begin{equation}
    P^{g_c}_{2\gamma}=\left(-\Delta_{S^n}+\frac{(n-2\gamma)(n+2\gamma-2)}{4}\right)P^{g_c}_{2(\gamma-1)}.
\end{equation}
Hence, by simple calculation, we find that
\begin{equation}\label{expansion2}
    (p-2)P^{g_c}_{2\gamma}-\gamma\left(n+2\gamma-2\right)P^{g_c}_{2(\gamma-1)}=\frac{4\gamma}{n-2\gamma}\left(-\Delta_{S^n}\right)P^{g_c}_{2(\gamma-1)}.
\end{equation}
We conclude that the operator in (\ref{expansion2}) is nonnegative with kernel exactly equal to the constant functions. Combing this with (\ref{spe2}) yields $u$ is constant. The final conclusion follows from the fact that if $\Phi\in {\mathrm{Conf}}(S^{n})$, then the Jacobian determinant of $\Phi$ is the form of
\begin{displaymath}
\frac{C}{|1-\langle\eta, \xi\rangle|^n}, \quad \eta\in S^{n}
\end{displaymath}
for some $C>0, \xi\in \R^{n+1}, |\xi|<1$.
\end{proof}

\appendix
\section{Intertwining operators on $S^{2n+1}$}
Following the idea in \cite{BFM07}, in this appendix we give an explicit calculation of the spectrum of the intertwining operators $\mathcal{A}_{w,w'}^{\theta_0}$ as defined by (\ref{intertwining}); a consequence of this calculation will be formula (\ref{spectrum}) up to a constant.

\begin{proposition}\label{generalclass}
    For $0<\gamma<\frac{Q}{2}$, let $(w,w')\in \R^2$ such that $w+w'+n+1=\gamma$. Suppose that the operator $\mathcal{A}_{w,w'}^{\theta_0}$ is formally self-adjoint in $S^{\gamma,2}$ and is intertwining, i.e., for any $F\in C^{\infty}(S^{2n+1})$, $\tau\in \mathcal{A}{\textbf{ut}}(S^{2n+1})$,
\begin{equation}
    J_{\tau}^{\gamma-w}\bar{J}_{\tau}^{\gamma-w'}\left(\mathcal{A}_{w,w'}^{\theta_0}F\right)\circ \tau= \mathcal{A}_{w,w'}^{\theta_0}\left(J_{\tau}^{-w}\bar{J}_{\tau}^{-w'}\left(F\circ \tau\right)\right).
\end{equation}
Then $\mathcal{A}_{w,w'}^{\theta_0}$ is diagonal with respect to the spherical harmonics, and for every $Y_{j,k}\in \mathcal{H}^{\S}_{j,k}$,
\begin{equation*}
    \mathcal{A}_{w,w'}^{\theta_0}Y_{j,k}=c\lambda_j(w)\lambda_k(w')Y_{j,k}
\end{equation*}
for some constant $c\in \R$, with
\begin{equation*}
    \lambda_j(w)=\frac{\Gamma(j+\gamma-w)}{\Gamma(j-w)}, \quad \lambda_k(w')=\frac{\Gamma(k+\gamma-w')}{\Gamma(k-w')}.
\end{equation*}
Vice verse, a self-adjoint operator $\mathcal{Q}_{w,w'}^{\theta_0}$ with eigenvalues $\lambda_j(w)\lambda_k(w')$ is intertwining. 
\end{proposition}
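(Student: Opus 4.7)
My plan is to follow the general strategy of \cite{BFM07}, reducing first to a Schur-type argument and then extracting eigenvalues from an infinitesimal form of the intertwining relation.

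First I would exploit the fact that the unitary group $U(n+1)\subset SU(n+1,1)$ sits inside the CR automorphism group as the subgroup of maps with $|J_\tau|=1$. For such $\tau$ one has $J_\tau^{-w}\bar J_\tau^{-w'}=1$ (after choosing the natural branch, using $w+w'+n+1=\gamma\in\R$), so the intertwining relation (\ref{intertwining}) reduces to $U(n+1)$-equivariance: $\mathcal{A}_{w,w'}^{\theta_0}\circ\tau^*=\tau^*\circ\mathcal{A}_{w,w'}^{\theta_0}$. Since $\mathcal{H}^{\S}_{j,k}$ is well-known to be an irreducible representation of $U(n+1)$, and since the representations for distinct bidegrees $(j,k)$ are pairwise inequivalent, Schur's lemma forces $\mathcal{A}_{w,w'}^{\theta_0}$ to act as a scalar $\mu_{j,k}$ on each $\mathcal{H}^{\S}_{j,k}$. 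In particular the operator is diagonal.

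Next I would determine the $\mu_{j,k}$ by testing (\ref{intertwining}) on a one-parameter subgroup $\tau_s$ of non-unitary CR automorphisms, e.g.\ those fixing the points $(0,\dots,0,\pm 1)$ and moving the contact along the $z_{n+1}$ direction. Differentiating (\ref{intertwining}) at $s=0$ yields an infinitesimal intertwining identity of the form
\begin{equation*}
   [\mathcal{A}_{w,w'}^{\theta_0},X] \;=\; (\gamma-w)\,A\cdot\mathcal{A}_{w,w'}^{\theta_0} + (\gamma-w')\,\bar A\cdot\mathcal{A}_{w,w'}^{\theta_0} - \mathcal{A}_{w,w'}^{\theta_0}\bigl(w A + w'\bar A\bigr),
\end{equation*}
where $X=\dot\tau_s|_{s=0}$ is an explicit CR vector field and $A=\dot J_{\tau_s}/J_{\tau_s}|_{s=0}$ is the infinitesimal logarithmic Jacobian. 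Applying both sides to a spherical harmonic $Y_{j,k}\in\mathcal{H}^{\S}_{j,k}$ and extracting the $(j+1,k)$-component on the right (using the decomposition $z_l\mathcal{Y}_{j,k}=(z_l\mathcal{Y}_{j,k}-\tfrac{|z|^2}{n+j+k}\partial_{\bar z_l}\mathcal{Y}_{j,k})+\tfrac{|z|^2}{n+j+k}\partial_{\bar z_l}\mathcal{Y}_{j,k}$ already used in the proof of Theorem \ref{commutator}) yields a two-term recursion
\begin{equation*}
    \frac{\mu_{j+1,k}}{\mu_{j,k}}=\frac{j+\gamma-w}{j-w},\qquad \frac{\mu_{j,k+1}}{\mu_{j,k}}=\frac{k+\gamma-w'}{k-w'}.
\end{equation*}
Solving these recursions gives $\mu_{j,k}=c\,\lambda_j(w)\,\lambda_k(w')$ with a single undetermined constant $c$ that is fixed by our normalization in (\ref{form}).

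For the converse, I would simply define $\mathcal{Q}_{w,w'}^{\theta_0}$ by the spectral formula $\mathcal{Q}_{w,w'}^{\theta_0}Y_{j,k}=\lambda_j(w)\lambda_k(w')Y_{j,k}$. Self-adjointness is automatic since the eigenvalues are real. Intertwining can be checked by running the previous computation in reverse: the infinitesimal identity above holds on every $Y_{j,k}$ by construction of $\lambda_j(w),\lambda_k(w')$, so the identity (\ref{intertwining}) holds on a dense set of one-parameter subgroups of $SU(n+1,1)$; together with the $U(n+1)$-equivariance of the spectral formula, which follows from Schur's lemma, this covers a generating set of $SU(n+1,1)$ and hence the full group. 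The main obstacle is the bookkeeping in the infinitesimal step: one must compute the vector field $X$ and the function $A$ explicitly, and carefully separate the component of $X\cdot Y_{j,k}$ that lies in $\mathcal{H}^{\S}_{j+1,k}$ (respectively $\mathcal{H}^{\S}_{j,k+1}$) from the lower-order parts so that the recursion emerges cleanly; this is essentially the same calculation underlying Theorem \ref{commutator}, recycled in a slightly different form.
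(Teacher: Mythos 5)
Your plan follows the same route as the paper: Schur's lemma (via $U(n+1)$-equivariance) gives diagonality, then differentiating the intertwining relation along the one-parameter group of ``dilations'' $\tau_\delta$ (the automorphisms fixing the two poles) produces a first-order identity whose spherical-harmonic components yield the recursion $\mu_{j+1,k}/\mu_{j,k}=(j+\gamma-w)/(j-w)$ and $\mu_{j,k+1}/\mu_{j,k}=(k+\gamma-w')/(k-w')$, which is then solved. The one genuine methodological divergence is in how the recursion is extracted: the paper specializes to the zonal harmonic $\Psi_{j,k}(z)=\bar z^{\,k-j}P_j^{(n-1,k-j)}(2|z|^2-1)$, expands in Jacobi polynomials, and isolates the $\mathcal{H}^{\S}_{j+1,k}\oplus\mathcal{H}^{\S}_{j,k+1}$ components by comparing leading coefficients separately for $z$ real and $z$ purely imaginary; you propose instead to act on a general $Y_{j,k}$ and invoke the algebraic splitting $z_l\mathcal{Y}_{j,k}=\bigl(z_l\mathcal{Y}_{j,k}-\tfrac{|z|^2}{n+j+k}\partial_{\bar z_l}\mathcal{Y}_{j,k}\bigr)+\tfrac{|z|^2}{n+j+k}\partial_{\bar z_l}\mathcal{Y}_{j,k}$. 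That should work, but you have not actually carried out the projection (you note this yourself); the paper's Jacobi-polynomial computation is precisely the bookkeeping you defer, so what you gain in abstraction you pay back in verifying that the $(j+1,k)$- and $(j,k+1)$-coefficients close up into a two-term recursion with no cross terms.

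Two small slips worth fixing. First, for $\tau\in U(n+1)$ one has $|J_\tau|=1$, not $J_\tau^{-w}\bar J_\tau^{-w'}=1$; the multiplier is a nonzero constant of modulus one, which is harmless because it cancels by $\C$-linearity of $\mathcal{A}_{w,w'}^{\theta_0}$ (or one can restrict to $SU(n+1)$, which already acts irreducibly on each $\mathcal{H}^{\S}_{j,k}$), but the statement as written is not correct. Second, your infinitesimal identity has a sign error: differentiating $J_{\tau_s}^{\gamma-w}\bar J_{\tau_s}^{\gamma-w'}(\mathcal{A}F)\circ\tau_s=\mathcal{A}\bigl(J_{\tau_s}^{-w}\bar J_{\tau_s}^{-w'}(F\circ\tau_s)\bigr)$ at $s=0$ gives $[\mathcal{A},X]=\bigl((\gamma-w)A+(\gamma-w')\bar A\bigr)\mathcal{A}+\mathcal{A}\bigl((wA+w'\bar A)\,\cdot\,\bigr)$, with a plus sign on the last term, not the minus you wrote. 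Finally, for the ``vice versa'' part, ``running the computation in reverse'' establishes the intertwining relation for dilations and $U(n+1)$, but to promote this to the full group you must verify that these generate $\mathcal{A}\textbf{ut}(S^{2n+1})$ (true, but should be said), and that the identity propagates from the dense span of spherical harmonics to all of $C^\infty(S^{2n+1})$ via self-adjointness and continuity, which is exactly the density argument the paper gives in the last paragraph of the appendix.
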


\begin{proof}
    The fact that $\mathcal{A}_{w,w'}^{\theta_0}$ is diagonal follows from Schur's lemma and the irreducibility of the spaces $\mathcal{H}^{\S}_{j,k}$. Suppose that $\mathcal{A}_{w,w'}^{\theta_0}\Phi_{j,k}=\lambda_{j,k}\Phi_{j,k}$. From now on, in the formulation (\ref{zonal}) of $\Phi_{j,k}$, we choose $\eta$ to be the north pole of $S^{2n+1}$ and denote
    \begin{equation*}
        \Psi_{j,k}(z)=\Phi_{j,k}(\zeta,\eta)=\bar{z}^{k-j}P_j^{(n-1,k-j)}(2|z|^2-1), \quad z=\zeta_{n+1}, \quad j\leqslant k
    \end{equation*}
    such that we have $\mathcal{A}_{w,w'}^{\theta_0}\Psi_{j,k}=\lambda_{j,k}\Psi_{j,k}$.

    Consider the family of dilations on $\H^n$, which on $S^{2n+1}$ take the form
    \begin{equation}
        \tau_{\delta}(\zeta)=\tau_{\delta}(\zeta',\zeta_{n+1})\left(\frac{2\delta \zeta'}{1+\zeta_{n+1}+\delta^2(1-\zeta_{n+1})},\frac{1+\zeta_{n+1}-\delta^2(1-\zeta_{n+1})}{1+\zeta_{n+1}+\delta^2(1-\zeta_{n+1})}\right).
    \end{equation}
    The conformal factor $J_{\tau_{\delta}}$ of $\tau_{\delta}$ is given by
    \begin{equation*}
        J_{\tau_{\delta}}(\zeta)=\frac{2\delta}{1+\zeta_{n+1}+\delta^2(1-\zeta_{n+1})}.
    \end{equation*}
    Direct computation shows that
    \begin{equation*}
    \begin{split}
        \frac{d}{d\delta}\bigg|_{\delta=1}J_{\tau_{\delta}}^{-w}\bar{J}_{\tau_{\delta}}^{-w'}&=-wz-w'\bar{z},\\
        \frac{d}{d\delta}\bigg|_{\delta=1}\tau_{\delta}(\zeta)\cdot \eta&=z^2-1.
        \end{split}
    \end{equation*}
    Therefore, we have
    \begin{equation}\label{derivativezonal}
    \begin{split}
        \frac{d}{d\delta}\bigg|_{\delta=1} J_{\tau_{\delta}}^{-w}\bar{J}_{\tau_{\delta}}^{-w'}&\left(\Psi_{j,k}\circ \tau_{\delta}\right)=\left(-wz-w'\bar{z}\right)\bar{z}^{k-j}P_j^{(n-1,k-j)}(2|z|^2-1)\\
        &+(k-j)(\bar{z}^2-1)\bar{z}^{k-j-1}P_j^{(n-1,k-j)}(2|z|^2-1)\\
        &+2(z+\bar{z})(|z|^2-1)\bar{z}^{k-j}\frac{d}{dx}P_j^{(n-1,k-j)}(2|z|^2-1).
        \end{split}
    \end{equation}
    The above quantity is a polynomial in $z$ and $\bar{z}$ with highest order monomials that are multiples of $z^j\bar{z}^{k+1}$ and $z^{j+1}\bar{z}^k$. The projection of (\ref{derivativezonal}) on $\mathcal{H}^{\S}_{j+1,k}\bigoplus \mathcal{H}^{\S}_{j,k+1}$ gives that for fixed $0\leqslant j<k$,
    \begin{equation}\label{small}
        \begin{split}
            \frac{d}{d\delta}\bigg|_{\delta=1} J_{\tau_{\delta}}^{-w}\bar{J}_{\tau_{\delta}}^{-w'}\left(\Psi_{j,k}\circ \tau_{\delta}\right)\bigg|_{\mathcal{H}^{\S}_{j+1,k}\bigoplus \mathcal{H}^{\S}_{j,k+1}}&=A\bar{z}^{k-j-1}P_{j+1}^{(n-1,k-j-1)}(2|z|^2-1)\\
            &+B\bar{z}^{k-j+1}P_j^{(n-1,k-j+1)}(2|z|^2-1),
        \end{split}
    \end{equation}
    and for $j=k$,
    \begin{equation}\label{equal}
        \begin{split}
            \frac{d}{d\delta}\bigg|_{\delta=1} J_{\tau_{\delta}}^{-w}\bar{J}_{\tau_{\delta}}^{-w'}\left(\Psi_{j,k}\circ \tau_{\delta}\right)\bigg|_{\mathcal{H}^{\S}_{j+1,j}\bigoplus \mathcal{H}^{\S}_{j,j+1}}&=AzP_{j}^{(n-1,1)}(2|z|^2-1)\\
            &+B\bar{z}^{1}P_j^{(n-1,1)}(2|z|^2-1).
        \end{split}
    \end{equation}
    Our main goal is to determine $A$ and $B$. In order to do this, we consider the case $z$ real and $z$ imaginary, and we compare the coefficients of leading terms in (\ref{derivativezonal}) and (\ref{small})-(\ref{equal}). What we need here is the coefficient of $x^j$ in a Jacobi polynomials of order $j$ which is given by
    \begin{equation*}
        \frac{1}{j!}\frac{d^j}{dx^j}P_j^{(\alpha,\beta)}(x)=\frac{1}{2^jj!}\frac{\Gamma(2j+\alpha+\beta+1)}{\Gamma(j+\alpha+\beta+1)}.
    \end{equation*}
    On one hand, if $z$ is real, a comparison of the coefficients of $z^{k+j+1}$ yields
    \begin{equation}
        (-w-w'+k+j)\frac{\Gamma(k+j+n)}{j!\Gamma(k+n)}=A\frac{\Gamma(k+j+n+1)}{(j+1)!\Gamma(k+n)}+B\frac{\Gamma(k+j+n+1)}{j!\Gamma(k+n+1)},
    \end{equation}
    i.e.,
    \begin{equation}\label{real}
        (-w-w'+k+j)=A\frac{k+j+n}{j+1}+B\frac{k+j+n}{k+n}.
    \end{equation}
    On the other hand, if $z$ is purely imaginary, the same comparison yields
    \begin{equation}
    \begin{split}
        (-i)^{k-j+1}(k-j+w-w')\frac{\Gamma(k+j+n)}{j!\Gamma(k+n)}&=(-i)^{k-j-1}A\frac{\Gamma(k+j+n+1)}{(j+1)!\Gamma(k+n)}\\
        &+(-i)^{k-j+1}B\frac{\Gamma(k+j+n+1)}{j!\Gamma(k+n+1)},
        \end{split}
    \end{equation}
    i.e.,
    \begin{equation}\label{imaginary}
        k-j+w-w'=-A\frac{k+j+n}{j+1}+B\frac{k+j+n}{k+n}.
    \end{equation}
    Solving (\ref{real}) and (\ref{imaginary}) for $A$ and $B$, we obtain that
    \begin{equation}
        A=(j-w)\frac{j+1}{k+j+n},\quad   B=(k-w')\frac{k+n}{k+j+n}, 
    \end{equation}
    which means that for $0\leqslant j\leqslant k$,
    \begin{equation}\label{zonaldecompositon}
    \begin{split}
        &\frac{d}{d\delta}\bigg|_{\delta=1} J_{\tau_{\delta}}^{-w}\bar{J}_{\tau_{\delta}}^{-w'}\left(\Psi_{j,k}\circ \tau_{\delta}\right)\bigg|_{\mathcal{H}^{\S}_{j+1,k}\bigoplus \mathcal{H}^{\S}_{j,k+1}}=\\
&(j-w)\frac{j+1}{k+j+n}\Psi_{j+1,k}+(k-w')\frac{k+n}{k+j+n}\Psi_{j,k+1}       \end{split}
    \end{equation}
    The intertwining property of $\mathcal{A}_{w,w'}^{\theta_0}$ means that
    \begin{equation}
        \lambda_{j,k} J_{\tau_{\delta}}^{\gamma-w}\bar{J}_{\tau_{\delta}}^{\gamma-w'}\Psi_{j,k}\circ \tau_{\delta}= \mathcal{A}_{w,w'}^{\theta_0}\left(J_{\tau_{\delta}}^{-w}\bar{J}_{\tau_{\delta}}^{-w'}\left(\Psi_{j,k}\circ \tau_{\delta}\right)\right)
    \end{equation}
    Differentiating in $\delta$ and using (\ref{zonaldecompositon}) yield
    \begin{equation}\label{induction}
        \begin{split}
            &\lambda_{j,k}(j+\gamma-w)\frac{j+1}{k+j+n}\Psi_{j+1,k}+\lambda_{j,k}(k+\gamma-w')\frac{k+n}{k+j+n}\Psi_{j,k+1}\\
            &=\lambda_{j+1,k}(j-w)\frac{j+1}{k+j+n}\Psi_{j+1,k}+\lambda_{j,k+1}(k-w')\frac{k+n}{k+j+n}\Psi_{j,k+1}.  
        \end{split}
    \end{equation}
    which implies that
    \begin{equation}
        \lambda_{j+1,k}=\lambda_{j,k}\frac{j+\gamma-w}{j-w},\quad \lambda_{j,k+1}=\lambda_{j,k}\frac{k+\gamma-w'}{k-w'}, \quad 0\leqslant j\leqslant k.
    \end{equation}
    We conclude that for $0\leqslant j\leqslant k$, 
    \begin{equation}
       \lambda_{j,k}=\lambda_{0,k}\frac{\Gamma(j+\gamma-w)}{\Gamma(j-w)}=\lambda_{0,0}\frac{\Gamma(j+\gamma-w)}{\Gamma(j-w)}\frac{\Gamma(k+\gamma-w')}{\Gamma(k-w')}.
    \end{equation}
   Since $\Phi_{j,k}(\zeta,\eta)=\overline{\Phi_{k,j}(\zeta,\eta)}$, $k\leqslant j$ and the spectrum of $\mathcal{A}_{w,w'}^{\theta_0}$ is real, taking the conjugation in (\ref{induction}) yields that for $j,k\in \N_0$,
   \begin{equation*}
        \lambda_{j,k}=\lambda_{0,0}\frac{\Gamma(j+\gamma-w)}{\Gamma(j-w)}\frac{\Gamma(k+\gamma-w')}{\Gamma(k-w')}.
   \end{equation*}
Let $\lambda_{0,0}=1$ and $R_{w,w'}^{\theta_0}=\mathcal{A}_{w,w'}^{\theta_0}-\mathcal{Q}_{w,w'}^{\theta_0}$.
The assumption implies that $R_{w,w'}^{\theta_0}=0$ on $\mathrm{Span}\left\{\bigoplus_{j,k\in \N_0}\mathcal{H}^{\S}_{j,k}\right\}$. Under the standard inner product $(\cdot,\cdot)$ in $L^2(S^{2n+1})$, we have that for any $H\in \mathrm{Span}\left\{\bigoplus_{j,k\in \N_0}\mathcal{H}^{\S}_{j,k}\right\}$, $F\in C^{\infty}(S^{2n+1})$, 
\begin{equation*}
    \left(R_{w,w'}^{\theta_0}(F),H\right)=\left(F,R_{w,w'}^{\theta_0}(H)\right)=0,
\end{equation*}
where we use the self-adjointness of $R_{w,w'}^{\theta_0}$. By the continuity of the inner product and the fact that $L^2(S^{2n+1})=\overline{\mathrm{Span}\left\{\bigoplus_{j,k\in \N_0}\mathcal{H}^{\S}_{j,k}\right\}}$, we obtain that
\begin{equation*}
    \left(R_{w,w'}^{\theta_0}(F),G\right)=0, \quad \forall F\in C^{\infty}(S^{2n+1}), G\in L^2(S^{2n+1}),
\end{equation*}
which implies that $R_{w,w'}^{\theta_0}=0$ on $C^{\infty}(S^{2n+1})$.
   
\end{proof}

\bibliography{mybib}{}
\bibliographystyle{alpha}

\end{document}